\newcommand\R{\mathbb R} \newcommand\C{\mathbb C}
\newcommand\PP{\mathbb P}
\newcommand{\Diag}{\operatorname{Diag}}
\newcommand{\adj}{\operatorname{adj}}
\newcommand{\tr}{\operatorname{tr}}
\newcommand{\td}[1]{}
 \title{Tensor Rank, Invariants, Inequalities, and Applications}
\author{Elizabeth S.~Allman\footnotemark[2]
\and Peter D. Jarvis\footnotemark[3] 
\and John A.~Rhodes\footnotemark[2]
\and Jeremy G. Sumner\footnotemark[3] 
}
\begin{document}

\date\today

\maketitle

\renewcommand{\thefootnote}{\fnsymbol{footnote}}
\footnotetext[2]{Department of Mathematics and Statistics,
University of Alaska, PO Box 756660, Fairbanks AK 99775, USA ({\tt e.allman@alaska.edu, j.rhodes@alaska.edu}). Supported in part by US National Science Foundation grant DMS 0714830.}
\footnotetext[3]{School of Mathematics and Physics,
University of Tasmania, Private Bag 37, Hobart, TAS 7001, Australia ({\tt peter.jarvis@utas.edu.au, jsumner@utas.edu.au}). 
Supported in part by Australian Research Council grants DP0877447 and FT100100031.
}

\renewcommand{\thefootnote}{\aribic{footnote}}

\begin{abstract}
Though algebraic geometry over $\mathbb C$ is often used to describe the closure of the tensors of a given size and complex rank, this variety includes tensors of both smaller and larger rank. Here we focus on the $n\times n\times n$ tensors of rank $n$ over $\mathbb C$,  which has as a dense subset the orbit of a single tensor under a natural group action. We construct polynomial invariants under this group action whose non-vanishing distinguishes this orbit from points only in its closure. Together with an explicit subset of the defining polynomials of the variety, this gives a semialgebraic description of the tensors of rank $n$ and multilinear rank $(n,n,n)$.
The polynomials we construct coincide with  Cayley's hyperdeterminant in the case $n=2$, and thus generalize it. Though our construction is direct and explicit, we also recast our functions in the language of representation theory for additional insights.

We give three applications in different directions: First, we develop basic  topological understanding of how the real tensors of complex rank $n$ and multilinear rank $(n,n,n)$ form a collection of path-connected subsets, one of which contains tensors of real rank $n$. Second, we use the invariants to develop a semialgebraic description of the set of probability distributions that can arise from a simple stochastic model with a hidden variable, a model that is important in phylogenetics and other fields. Third, we construct simple examples of tensors of rank $2n-1$ which lie in the closure of those of rank $n$.
\end{abstract}

\begin{keywords} 
tensor rank, hyperdeterminant, border rank, latent class model, phylogenetics
\end{keywords}

\begin{AMS}
15A72,14P10
\end{AMS}

\pagestyle{myheadings}
\thispagestyle{plain}
\markboth{E.~S.~ALLMAN, P.~D.~JARVIS, J.~A.~RHODES, AND J.~G.~SUMNER}{TENSOR RANK, INVARIANTS, INEQUALITIES, AND APPLICATIONS }

\section{Introduction}

The notion of tensor rank naturally extends the familiar notion of matrix rank for two-dimensional numerical arrays  to $d$-dimensional arrays, and likewise has extensive connections to applied problems. However, basic questions about tensor rank can be much more difficult to answer than their matrix analogs, and many open problems remain.
Several natural problems concerning tensor rank are to  determine for a given field the rank of an explicitly given tensor, to determine for a given field and format the possible ranks of all tensors, and to determine for a given field and format the generic rank(s) of a tensor. While the matrix versions of these problems are solved by an understanding of Gaussian elimination and determinants, for higher dimensional tensors they have so far eluded general solutions.

\smallskip

The case of $2\times 2\times 2$ tensors, however, is quite well studied \cite{dSL}, and provides one model of desirable understanding: Over $\mathbb C$ or $\mathbb R$, such a tensor may have rank $0$, $1$, $2$, or $3$ only. Over $\C$ descriptions of the sets of tensors of each of these possible ranks may be given, in terms of intersections, unions, and complements of explicit algebraic varieties. These descriptions can be thus phrased as boolean combinations of polynomial equalities. Over $\R$, analogous explicit descriptions require polynomial inequalities using `$>$' as well, and the descriptions are thus semialgebraic. For larger tensors  of any given rank the existence of a semialgebraic description is a consequence of the Tarski-Seidenberg Theorem  \cite{Tarski1951, Seidenberg1954}, but complete explicit descriptions are not known.

A polynomial
of particular importance in understanding the  $2\times 2\times 2$ case is \emph{Cayley's hyperdeterminant} \cite{Cayley}, 
\begin{multline}
\Delta(P)=(p_{000}^2p_{111}^2+p_{001}^2p_{110}^2+p_{010}^2p_{101}^2+p_{011}^2p_{100}^2)
\\ -2(p_{000}p_{001}p_{110}p_{111}+
p_{000}p_{010}p_{101}p_{111}+p_{000}p_{011}p_{100}p_{111} \\
+p_{001}p_{010}p_{101}p_{110}+p_{001}p_{011}p_{110}p_{100}+p_{010}p_{011}p_{101}p_{100})\\
+4(p_{000}p_{011}p_{101}p_{110}+p_{001}p_{010}p_{100}p_{111}),\label{eq:hyper}
\end{multline}
also known as the \emph{tangle} in the physics literature \cite{coffman2000}. The function $\Delta$ has  non-zero values precisely on a certain dense subset of those $2\times 2\times 2$ tensors of complex tensor rank 2. If a tensor is real, the sign of the $\Delta$ further indicates information about its tensor rank  over $\mathbb R$: If $\Delta>0$, then the tensor has real tensor rank $2$, and if $\Delta<0$ its real tensor rank is 3. 

The role of $\Delta$ here can be partially understood as a consequence of it being an invariant of the group $GL(2,\C)\times GL(2,\C)\times GL(2,\C)$, which acts on $2\times 2\times 2$ complex tensors in the three indices, and preserves their tensor rank. The transformation property of $\Delta$ under this group, along with explicit evaluation at a particular tensor of rank 2,  implies it is non-zero on an orbit which is dense among all tensors of rank 2. The fact that it is zero off of this orbit can be shown by first determining a list of canonical representatives of other orbits, and then explicitly evaluating $\Delta$ on them to see that it vanishes. Thus both the transformation property of $\Delta$ under the group and the  ability to evaluate $\Delta$ at specific points are essential.

\medskip

In this work we focus on $n\times n\times n$ tensors of tensor rank $n$, over $\mathbb C$ and over $\mathbb R$, with the goal of
generalizing our detailed understanding of the $2\times 2\times 2$ tensors to this particular case. Although we do not translate our results here to the cases of 
$n_1\times n_2\times n_3$ tensors of rank $n$ with $n\le n_i$, this should be possible by applying maps of $\C^{n_i}\to\C^n$. Thus what is most important about this case is that
the dimensions of the tensor are sufficiently large that they do not put constrictions on studying the given rank $n$. (A more careful reading will show that for many arguments we only  need $n_i\ge n$ for at least $2$ values of $i$.)

Over $\C$, the rank-$2$ tensors are dense among all $2\times 2\times 2$ tensors. However, for $n>2$, the closure of the rank-$n$ tensors
in the $n\times n\times n$ ones forms an algebraic variety of dimension strictly less than $n^3$. From the closure operation, this variety contains all tensors of rank $<n$, as well as some of rank $>n$. 
Much previous work has focused on determining defining polynomials of this variety, that is, polynomials that vanish on all such rank $n$ tensors. For $n=3$, the ideal of polynomials defining this variety is known \cite{GSS}. For $n=4$ a set-theoretic defining set of polynomials has been determined \cite{Friedland,BatesOeding,FriedlandGross}. For all  $n\ge3$, many polynomials in the ideal are known, through a general construction of `commutation relations'  \cite{AR03,ARgm}. Moreover, the commutation relations give the full ideal up to an explicit saturation, and taking a radical. Nonetheless, the full ideal is still not understood if $n\ge 4$.

In this article we turn from studying polynomial equalities related to tensor rank issues,  to inequalities.
Our main contribution is a generalization to arbitrary $n$ of the $n=2$ hyperdeterminant, $\Delta$, of Cayley. We obtain polynomial functions whose nonvanishing singles out a dense orbit of the tensors of rank $n$ from their closure.
We emphasize that this generalization does \emph{not} lead to those functions standardly called `hyperdeterminants' in the modern mathematics literature \cite{GKZ}, but rather to a set of functions that generalize the properties of $\Delta$ in another way, appropriate to the problem at hand. Just as $\Delta$ defines a one-dimensional representation of $GL(2,\mathbb C) \times GL(2,\mathbb C)\times GL(2,\mathbb C)$, our functions determine a multidimensional representation of  $GL(n,\mathbb C) \times GL(n,\mathbb C)\times GL(n,\mathbb C)$. 

\smallskip

The usefulness of studying invariant spaces of polynomials for investigating tensor rank issues is, of course, not new (see, for instance, the survey \cite{LandsbergSurvey}, and its references, for instances going beyond $\Delta$). Because the relevant representation theory is so highly developed, such study can be  fairly abstract. While the gap between abstractly understanding such polynomials and concretely evaluating them is conceptually a small one, in practice it is by no means trivially bridged (\cite{BatesOeding} gives an excellent illustration of this). 
Since our arguments depend crucially on being able to explicitly evaluate our invariant polynomials on certain tensors,
a concrete approach to developing them is warranted here.

Specifically, we investigate the transformation properties of our functions under a group action, a reduction under that action of most tensors to a semi-canonical form which is made possible by knowledge of the commutation relations,  and the evaluation of our functions at these semi-canonical forms. Together, these allow us to make precise statements about the zero set of these polynomials within the closure of the rank-$n$ $n\times n\times n$ tensors that are analogous to statements about the zero set of $\Delta$ in the $2\times2\times2$ case. 
After this initial development, we reframe our work in the language of representation theory. Finally
we show how our generalization of $\Delta$ can be used for three different applications.

\medskip
 This paper is organized as follows:
After definitions and preliminaries in \S \ref{sec:defs}, in \S \ref{sec:variety} we recall relevant facts about the algebraic variety of $n\times n\times n$ tensors of rank $n$, and construct semi-canonical orbit representatives under the group action. We then construct our invariant polynomial functions in \S \ref{sec:construct}, and determine on which complex tensors of complex border rank at most $n$ they vanish. Then in \S \ref{sec:reps} we study these functions in the framework of representation theory for the group
 $GL(n,\C)\times GL(n,\C)\times GL(n,\C)$.
 
 As a first application, in \S \ref{sec:real} we use these polynomials to investigate real tensors of complex rank $n$.  We show that the zero set of our invariants divides the 
real points on the variety into several path components, each of which contains tensors of a single \emph{signature}. These signatures are characterized by the number of complex conjugate pairs of rank-1 tensors in their unique rank-$n$ decomposition. We also determine the number of path components of each signature, and find that the tensors of real rank $n$ form a single component. We extend, from $n=2$ to $n=3$, the result that the sign of a polynomial invariant distinguishes whether a tensor of complex rank $n$ also has real rank $n$. For larger $n$ the sign of our invariant is insufficient to single out the component composed of tensors of real rank $n$, but why it fails to do so is made clear.

In \S \ref{sec:stat} we turn to the application which originally
motivated our interest in $n\times n\times n$ tensors of rank $n$, which is their appearance as certain statistical models, in both latent class analysis and phylogenetics. For these applications (which we introduce more thoroughly in \S \ref{sec:stat}), such a tensor represents a joint probability distribution of three observed random variables, each with discrete state space of size $n$, and thus has non-negative entries summing to 1. Its decomposition into a sum of rank-1 tensors reflects the structure of the stochastic model, in which the distributions of each of the observed variables depend on the state of a common hidden (latent, or unobservable) variable with $n$ states. The phylogenetic application can be seen through an interpretation of the  observed variables as  having $4$ states, the bases $A,C,G,T$ that may appear at a particular site in a DNA sequences from 3 species, while the state of the hidden variable represents the base in an ancestral organism from which the others evolved.

For these statistical applications, the rank-1 components of these tensors are themselves probability distributions, up to scaling, so it is important to not only determine the rank of a tensor, but also to be able to determine if the rank-1 components have non-negative entries. The  $2\times2\times 2$ rank-2 case and its extension to phylogenetic trees has recently been studied in \cite{ZS} (see also \cite{Klaere,PearlTarsi}). In that work Cayley's hyperdeterminant $\Delta$  played an important role. Our work here began as a step toward extending some of the results of \cite{ZS} from $n=2$ to $n>2$. In this paper, however, we limit ourselves to the simplest phylogenetic model (on a 3-leaf tree), as the extension to larger trees depends on other ideas which will be presented in \cite{ART}.
Despite lacking a good test for determining that a tensor of complex rank $n$ has real rank $n$, we borrow ideas from \cite{ART} to give semialgebraic conditions that ensure a tensor  is a probability distribution arising from the latent class model (with certain mild conditions on the parameters). 

As a final  application of the main theorems of this paper, in \S \ref{sec:jump} we show examples of $n\times n\times n$ tensors of  border rank $n$, but rank larger than $n$. We give a simple, explicit example of a tensor of the sort with rank $2n-1$. When $n=2$ this gives the well-known canonical form of a  complex rank 3 tensor; however, for general  $n>2$ it appears to be new class of examples of `rank jumping' by a large amount.

\section{Three-dimensional tensors,  group actions, and rank}\label{sec:defs}
Denote the space of all complex tensors of \emph{format} $(n_1,n_2,n_3)$ by $S=S(n_1,n_2,n_3)=\C^{n_1}\otimes \C^{n_2}\otimes\C^{n_3}$. Note $S\cong \C^{n_1n_2n_3}$, but that one may view an element of $S$ concretely as a $n_1\times n_2 \times n_3$ array of complex numbers. We thus identify such tensors with three-dimensional hypermatrices.

Let  $$G(\C)=GL(n_1,\C)\times GL(n_2,\C)\times GL(n_3,\C),$$ and $$G(\R)=GL(n_1,\R)\times GL(n_2,\R)\times GL(n_3,\R)\subset G(\C),$$ which act on $S$ through the 3 indices of tensors.
We write this action on the right, using several interchangable notations, so that for $P\in S$, $(g_1,g_2,g_3)\in G(\C)$
$$P(g_1,g_2,g_3)=((P*_1 g_1)*_2g_2)*_3g_3=\cdots=((P*_3 g_3)*_2g_2)*_1g_1,$$
where, for instance,
$$(P*_3g_3)_{ijk}=\sum_{l=1}^{n_3} P_{ijl}g_3(l,k),$$
with similar formulas for the action in other indices.
This notation is also useful for multiplication by vectors, so that if $\mathbf v\in \C^{n_3}$, for instance, then $P*_3\mathbf v$ is a matrix with entries
$$(P*_3\mathbf v)_{ij}=\sum_{k=1}^{n_3} P_{ijk}v_k.$$

For $i\in\{1,2,3\}$ by the \emph{$i$-slices} of $P$ we mean the $n_i$ matrices $P*_i\mathbf e_j$, whose entries have the $i$-th index fixed as $j$.
For example for $i=3$, a tensor $P$ has matrix slices $P_1,P_2,\dots ,P_{n_3}$, where $P_j=P(\cdot,\cdot,j)$. The action of $G(\C)$ on a tensor can be understood through transformation of these slices, as $P'=P(g_1,g_2,I)$ has slices $P'_j=g_1^TP_jg_2$,
while $P''=P(I,I, g_3)$ has slices $P''_j=\sum_k P_k g_3(k,j)$.

\medskip

The \emph{complex tensor rank} of  $P\in S$ is the smallest integer $r$ such that
$$P=\sum_{i=1}^r \mathbf u_i\otimes\mathbf v_i\otimes \mathbf w_i,$$
for some $\mathbf u_i\in \C^{n_1}$, $\mathbf v_i\in \C^{n_2}$, and $\mathbf w_i\in \C^{n_3}$. For a real tensor, the real tensor rank is defined analgously, requiring that $\mathbf u_i,\mathbf v_i,\mathbf w_i$ be real. Note that  if a tensor is real, its real and complex tensor ranks need not be equal, though the complex tensor rank is an obvious lower bound for the real tensor rank.

There is also a notion of \emph{multilinear rank} of such a tensor, which is an ordered triple $(r_1,r_2,r_3)$. Here $r_i$ is the 
rank of the transformation 
\begin{align*}\C^{n_i}&\to \C^{n_j}\otimes\C^{n_k},\\
\mathbf v&\mapsto P*_i\mathbf v,
\end{align*}
associated to $P$, and thus is the
ordinary matrix rank of the $n_jn_k\times n_i$ flattening of $P$. The multilinear rank of a real tensor is thus independent of the choice of field $\R$ or $\C$, as the analogous fact holds for matrices.

Both tensor rank (which we often will refer to as simply \emph{rank}, or $\C$-rank or $\R$-rank if the field must be made clear) and multilinear rank are invariant under the action of the general linear groups. More precisely, if 
$P'=Pg$ with $g\in G(\C)$, then $P'$ and $P$ have the same $\C$-rank and multilinear rank. If $P$ is real and $g\in G(\R)$, then $P$ and $P'$ have the same $\R$-rank as well.

\medskip
For the remainder of the paper, we restrict our attention to the case $$n_1=n_2=n_3=n,$$ though, as pointed out in the introduction, many results are easily modified to cases with $n_i\ge n$.

If $\mathbf v\in \C^n$, let $\diag(\mathbf v)$ denote the $n\times n$ diagonal matrix whose $(i,i)$-entry is $v_i$. Similarly, let $\Diag(\mathbf v)$ denote the $n\times n\times n$ diagonal tensor with $(i,i,i)$-entry $v_i$.
In particular, if $\mathbf 1$ is the vector with all entries 1, then $\diag(\mathbf 1)=I_n$ is the identity matrix. We denote its tensor analog by $D=D_n=\Diag(\mathbf 1)$.

\smallskip

By $\mathcal D(\C)$ and $\mathcal D(\R)$ we denote the $G(\C)$- and $G(\R)$-orbits of $D$, respectively.

\smallskip

\begin{proposition}  $\mathcal D(\C)$ is the set of all $n\times n\times n$ complex tensors of $\C$-rank $n$ and multilinear rank $(n,n,n)$.

$\mathcal D(\R)$ is the set of all $n\times n\times n$ real tensors of $\R$-rank $n$ and multilinear rank $(n,n,n)$.
\end{proposition}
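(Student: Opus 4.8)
The plan is to establish the complex statement by a double inclusion, and then note the real statement follows by the identical argument. The inclusion of $\mathcal D(\C)$ into the set of tensors of $\C$-rank $n$ and multilinear rank $(n,n,n)$ is immediate: the expression $D=\sum_{i=1}^n\mathbf e_i\otimes\mathbf e_i\otimes\mathbf e_i$ shows $D$ has $\C$-rank at most $n$, while the $i$-th flattening $\mathbf v\mapsto D*_i\mathbf v$ has image spanned by the $n$ linearly independent rank-one matrices $\mathbf e_j\otimes\mathbf e_j$, so $D$ has multilinear rank $(n,n,n)$; since each component of the multilinear rank is a lower bound for the tensor rank, in fact $\C$-rank$(D)=n$. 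Because both tensor rank and multilinear rank are invariant under the $G(\C)$-action (as recalled above), every $P=Dg$ with $g\in G(\C)$ has the same values.

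For the reverse inclusion, suppose $P$ has $\C$-rank $n$ and multilinear rank $(n,n,n)$, and fix a rank decomposition $P=\sum_{i=1}^n\mathbf u_i\otimes\mathbf v_i\otimes\mathbf w_i$. The first step is to show that each of the three families $\{\mathbf u_i\}$, $\{\mathbf v_i\}$, $\{\mathbf w_i\}$ is a basis of $\C^n$. For $\{\mathbf u_i\}$, observe that the first flattening $\mathbf v\mapsto P*_1\mathbf v$ is the composition
\[
\C^n\ \xrightarrow{\ A\ }\ \C^n\ \xrightarrow{\ B\ }\ \C^n\otimes\C^n,
\]
where $A$ is the $n\times n$ matrix with $i$-th row $\mathbf u_i^{T}$ and $B$ is the $n^2\times n$ matrix with $i$-th column $\mathbf v_i\otimes\mathbf w_i$. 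Since $\operatorname{rank}(BA)\le\min(\operatorname{rank}A,\operatorname{rank}B)\le n$, the hypothesis that this flattening has rank $n$ forces $\operatorname{rank}A=n$, i.e.\ $\{\mathbf u_i\}$ is a basis; the second and third flattenings give the analogous conclusions for $\{\mathbf v_i\}$ and $\{\mathbf w_i\}$.

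Granting this, let $g_1,g_2,g_3$ be the matrices whose $i$-th rows are $\mathbf u_i^{T}$, $\mathbf v_i^{T}$, $\mathbf w_i^{T}$ respectively; each is invertible because its rows form a basis. Unwinding the definition of the right action one finds $D(g_1,g_2,g_3)=\sum_{i=1}^n(g_1^{T}\mathbf e_i)\otimes(g_2^{T}\mathbf e_i)\otimes(g_3^{T}\mathbf e_i)=\sum_{i=1}^n\mathbf u_i\otimes\mathbf v_i\otimes\mathbf w_i=P$, so $P\in\mathcal D(\C)$. For the real statement, the same argument applies: a real tensor of $\R$-rank $n$ admits a decomposition with real $\mathbf u_i,\mathbf v_i,\mathbf w_i$, the multilinear rank hypothesis is field-independent and still forces these to be real bases, so the resulting $g_1,g_2,g_3$ lie in $G(\R)$; the forward inclusion uses $G(\R)$-invariance of $\R$-rank and multilinear rank.

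I expect no serious obstacle here: the one place the hypotheses do real work --- and hence the heart of the argument --- is the deduction that the three sets of factor vectors are linearly independent, since this is exactly where ``multilinear rank $(n,n,n)$'' excludes degenerate rank-$n$ decompositions of $P$. The only remaining pitfall is purely notational, namely keeping the transpose and index conventions of the right action straight when verifying $D(g_1,g_2,g_3)=P$.
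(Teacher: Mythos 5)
Your proposal is correct and follows essentially the same route as the paper: forward inclusion by computing the rank and multilinear rank of $D$ and invoking $G$-invariance, reverse inclusion by showing the factor vectors of a rank-$n$ decomposition are bases (via the rank-$n$ flattenings) and assembling them into $g_1,g_2,g_3$ with $P=D(g_1,g_2,g_3)$. The only cosmetic difference is in bounding the rank of $D$ from below: the paper contracts with $\mathbf 1$ to exhibit $I_n$ as a marginal, while you invoke the general fact that each multilinear rank component bounds the tensor rank — both are fine.
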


\begin{proof}
First,  observe that $D=\Diag(\mathbf 1)$ has $\C$- and $\R$-rank $n$, and multilinear rank $(n,n,n)$: The multilinear rank is clear since the  $n^2\times n$ flattenings of $D$ all have the standard basis vectors among their rows. The $\C$- and $\R$-ranks of $D$ are at most $n$, since $$D=\sum_{i=1}^n \mathbf e_i\otimes \mathbf e_i\otimes \mathbf e_i.$$ If $D$ had  $\C$- or $\R$-rank  $k<n$, 
so $D=\sum_{i=1}^k  \mathbf u_i\otimes \mathbf v_i\otimes \mathbf w_i$,
then $$I_n=D*_3\mathbf 1=\sum_{i=1}^k  (\mathbf w_i\cdot \mathbf 1)\mathbf u_i\otimes \mathbf v_i$$
would have matrix rank $<n$, which is absurd. 

That every element of these orbits has the stated tensor rank and multilinear rank is a consequence of their invariance under the group actions. 

To see that every complex tensor $P$ of rank $n$ and multilinear rank $(n,n,n)$ lies in $\mathcal D(\C)$, first write
$$P=\sum_{i=1}^n \mathbf u_i\otimes\mathbf v_i\otimes \mathbf w_i.$$ Then since  the first flattening of $P$ to a $n^2\times n$ matrix has rank $n$, the $\mathbf u_i$ must be independent, so the matrix $g_1$ with $i$th row $\mathbf u_i$ is in $GL(n,\C)$. By the same reasoning, the matrices $g_2, g_3$ with $i$th rows $\mathbf v_i,\mathbf w_i$ are in $GL(n,\C)$. Then one checks that $P=D(g_1,g_2,g_3)\in \mathcal D(\C)$. 
The same argument applies in the real case.
\end{proof}

\smallskip

Note that
not every tensor of $\C$-rank $n$ is in $\mathcal D(\C)$. For instance, the tensor $P$ with slices $I,0,0,\dots,0$ has tensor rank $n$, but is not in $\mathcal D(\C)$ since it has multilinear rank $(n,n,1)$. However, $P$ is in the closure of $\mathcal D(\C)$, since one can give a sequence $\{h_i\}$ of elements in $GL(n,\C)$ with $\lim h_i= \mathbf 1\mathbf  e_1^T$, and then
$\lim D (I,I,h_i)=P$. Similarly reasoning shows that any tensor of complex tensor rank $\le n$ is in the closure of $\mathcal D(\C)$, and that an analogous statement holds for $\mathcal D(\R)$.

Finally, note that the closures of $\mathcal D(\C)$ and $\mathcal D(\R)$ also contain tensors of  rank  $>n$. Indeed, the phenomenon that tensor rank may \emph{increase} when one takes a limit is a key difference from the matrix rank. We will return to this with some explicit examples
in \S \ref{sec:jump}.

\section{The variety of rank $n$ tensors, and certain orbit representatives}\label{sec:variety}

Let $V_n\subseteq \C^{n^3}$ denote the closure of $\mathcal D(\C)$, under either the Zariski or standard topology, as these give the same set. This is the smallest algebraic variety containing all tensors of $\C$-rank $n$ and multilinear rank $(n,n,n)$. (It is straightforward to see 
it is also the smallest  variety containing all tensors of $\C$-rank $n$.)
Since $V_n$ is the closure of a $G(\C)$-invariant set, $V_n$ is also $G(\C)$-invariant.  As mentioned in the preceding section, for all $n\ge 2$, $V_n$ contains both tensors of rank less than $n$ and tensors of rank greater than $n$. Tensors in {$V_n\smallsetminus V_{n-1}$ have rank $\ge n$, and are said to have \emph{border rank} $n$.

A key fact we will use is that for all $n$ some defining equations for the variety $V_n$ are known, those given by the \emph{commutation relations} \cite{AR03,ARgm}. (The essential idea behind these seems to have first appeared in \cite{Strassen}.)

\smallskip

\begin{proposition} \label{prop:commute}  The ideal $I(V_n)$ of polynomials vanishing on $V_n$ includes those obtained from entries of the following matrix equations, $i=1,2,3$; $1\le j<k\le n$:
\begin{equation}\label{eq:commute}
(P*_i\mathbf e_j)\adj (P*_i\mathbf v) (P*_i\mathbf e_k)-(P*_i\mathbf e_k)\adj (P*_i\mathbf v) (P*_i\mathbf e_j)=0,
\end{equation}
where $\mathbf v\in \C^n$ is an arbitrary vector, and `adj' denotes the classical adjoint matrix.
\end{proposition}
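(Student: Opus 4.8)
The plan is to verify the identity \eqref{eq:commute} directly on the dense orbit $\mathcal D(\C)\subseteq V_n$ and then pass to the closure. Fix $\mathbf v\in\C^n$; for each $i$ and each $j<k$, every entry of the left-hand matrix in \eqref{eq:commute} is a polynomial in the entries of $P$ (the components of $\mathbf v$ being constants). Since $V_n=\overline{\mathcal D(\C)}$ in the Zariski topology, such a polynomial lies in $I(V_n)$ as soon as it vanishes on $\mathcal D(\C)$, so the task reduces to checking \eqref{eq:commute} for every $P=D(g_1,g_2,g_3)$ with $g_1,g_2,g_3\in GL(n,\C)$. By the evident symmetry among the three tensor slots it suffices to treat $i=3$.

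The first step is to record the slices of such a $P$. Combining the transformation rules for matrix slices recalled in \S\ref{sec:defs} (namely $P(g_1,g_2,I)$ has slices $g_1^TP_jg_2$ and $P(I,I,g_3)$ has slices $\sum_k P_kg_3(k,j)$) with the fact that $D$ has slices $D_j=\mathbf e_j\mathbf e_j^T$, one obtains, for every $\mathbf v$,
$$P*_3\mathbf v=g_1^T\,\diag(g_3\mathbf v)\,g_2,$$
and in particular $P*_3\mathbf e_j=g_1^T\,\diag(\mathbf c_j)\,g_2$, where $\mathbf c_j$ denotes the $j$th column of $g_3$. The second step is to rewrite $\adj(P*_3\mathbf v)$ using the multiplicative identities $\adj(XY)=\adj(Y)\adj(X)$ and $\adj(X^T)=\adj(X)^T$ — valid for all square matrices, since they are polynomial identities that hold on the Zariski-dense set $GL(n,\C)$ — together with $X\adj(X)=\adj(X)X=(\det X)I$; this gives $\adj(P*_3\mathbf v)=\adj(g_2)\,\adj(\diag(g_3\mathbf v))\,\adj(g_1)^T$.

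Substituting these into $(P*_3\mathbf e_j)\adj(P*_3\mathbf v)(P*_3\mathbf e_k)$, the inner factors $g_2\adj(g_2)=(\det g_2)I$ and $\adj(g_1)^Tg_1^T=((\det g_1)I)^T=(\det g_1)I$ collapse, leaving
$$(P*_3\mathbf e_j)\adj(P*_3\mathbf v)(P*_3\mathbf e_k)=(\det g_1)(\det g_2)\,g_1^T\,\diag(\mathbf c_j)\,\adj(\diag(g_3\mathbf v))\,\diag(\mathbf c_k)\,g_2.$$
The three middle factors are all diagonal, hence commute, so interchanging $j$ and $k$ leaves this matrix unchanged; subtracting the two yields $0$, which is \eqref{eq:commute}. (Conceptually, for invertible $A=P*_3\mathbf v$ the relation $B\adj(A)C=C\adj(A)B$ is equivalent to $[A^{-1}B,A^{-1}C]=0$, and here $A^{-1}(P*_3\mathbf e_j)=g_2^{-1}\diag(g_3\mathbf v)^{-1}\diag(\mathbf c_j)g_2$ are simultaneously diagonalizable — this is the source of the name ``commutation relations''.)

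I do not expect a genuine obstacle: the content is bookkeeping once the reduction to the orbit is set up. The points requiring care are that the transpose in the first-slot action makes the cancellation read $\adj(g_1)^Tg_1^T=(\det g_1)I$ rather than $g_1^T\adj(g_1)^T$; that one must invoke Zariski density of $GL(n,\C)$ to apply the adjoint identities to possibly singular matrices; and that if one wants the sharper statement that the coefficients of these polynomials in the $v_i$ also lie in $I(V_n)$, one simply treats the $v_i$ as extra indeterminates, which changes nothing in the computation above.
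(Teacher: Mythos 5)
Your argument is correct: the slice formula $P*_3\mathbf v=g_1^T\diag(g_3\mathbf v)\,g_2$ for $P=D(g_1,g_2,g_3)$ is right, the adjugate identities are applied correctly (including the transpose cancellation $\adj(g_1)^Tg_1^T=(\det g_1)I$), the middle diagonal factors do commute, and vanishing on $\mathcal D(\C)$ does give membership in $I(V_n)$ since $V_n$ is defined as the (Zariski) closure of that orbit. Note, however, that the paper itself offers no proof of this proposition at all: it is quoted as a known fact from the cited works on commutation relations (with the essential idea traced back to Strassen), so your write-up is a self-contained verification rather than a parallel to an in-paper argument. What your route buys is exactly that self-containedness, using only the group action and elementary adjugate algebra already set up in \S2; the cited sources (and the way the relations are actually used later, e.g.\ in the proof of Proposition 3.2, where $\mathbf v=\mathbf e_1$ and $P_1=I$ give $P_jP_k=P_kP_j$) reflect the same underlying mechanism you point out in your parenthetical remark, namely that for invertible $P*_i\mathbf v$ the relations say the matrices $(P*_i\mathbf v)^{-1}(P*_i\mathbf e_j)$ commute, which on the orbit of $D$ holds because they are simultaneously diagonalized. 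One small caveat worth keeping in mind: your closure step shows the entries lie in $I(V_n)$ for each fixed $\mathbf v$ (and, as you say, for the coefficients when $\mathbf v$ is treated as indeterminate), which is precisely what the proposition asserts; it does not, and need not, address the stronger statements in the literature about these polynomials generating or nearly generating the ideal.
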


While the identity \eqref{eq:commute} still holds if the $\mathbf e_j,\mathbf e_k$ appearing in it are replaced by more general vectors, this only yields linear combinations of the identities above and thus no essentially new polynomials. 
Moreover, by treating $\mathbf v$ as a vector of indeterminates, and considering the coefficients of the monomials in $\mathbf v$ that result from equation \eqref{eq:commute}, one may list a finite set of polynomials in $P$ linearly spanning this set for all choices of $\mathbf v$.

In the case $n=2$,  these relations are all trivial (\emph{i.e.,} simplify to $0=0$). If $n=3$, these polynomials are known to generate $I(V_3)$ \cite{GSS}. For $n\ge 4$, it is known that additional polynomials are needed to generate $I(V_n)$, but
little is known about them. A reward offered by one of the authors (ESA) in 2007 for determining $I(V_4)$ has led to that question being called  `The Salmon Problem.' Currently, only set-theoretic defining polynomials have been determined \cite{Friedland,BatesOeding,FriedlandGross}.

\medskip

Though the orbit of $D$ is dense in $V_n$, additional orbits lie in $V_n$ as well.
Next we show that some of the $G(\C)$-orbits in $V_n$ have orbit representatives of a certain form. This semi-canonical form will be used for determining on which tensors  the functions constructed in the next section vanish.
 
\smallskip
 
\begin{definition} An  $n\times n\times n$ tensor $P$ is \emph{$i$-slice-non-singular} if there is some $\C$-linear combination of the $i$-slices that is non-singular, and \emph{slice-non-singular} if it is $i$-slice-non-singular for some $i\in\{1,2,3\}$. If $P$ is not $i$-slice-non-singular, we say it is \emph{$i$-slice-singular}. If $P$ is not slice-non-singular, we say it is \emph{slice-singular}.
\end{definition}

Note that $P$ is $i$-slice-non-singular if, and only if, one can act on $P$ in the $i$th index by an element of $GL(n,\C)$ to obtain a tensor with a non-singular $i$-slice. Thus the terms in the above definition all depend only on the $G(\C)$ orbit of $P$. To investigate orbits, we consider slice-singular and slice-non-singular ones separately.

Let $\mathbf x=(x_1,x_2,\dots, x_n)$ be a vector of indeterminates. Then $P$ is $i$-slice-singular precisely when
$h_i(P;\mathbf x)= \det(P*_i\mathbf x)$ is the zero polynomial in $\mathbf x$. Thus the $i$-slice-singular tensors form an algebraic variety, defined by setting equal to zero the coefficients of each $\mathbf x$-monomial in the expansion of $h_i$.

\smallskip 

\begin{proposition}\label{prop:orbitrep}
Suppose $P$ is $i$-slice-non-singular and for that $i$ satisfies the polynomials of equation \eqref{eq:commute} in Proposition \ref{prop:commute}. Then $P$ has a $G(\C)$-orbit representative with all $i$-slices upper triangular. 

Moreover,
if the matrix $Z$ whose columns are the diagonals of the $i$-slices of such a representative is non-singular, then $P\in \mathcal D(\C)$. If $Z$ is singular,
then an orbit representative exists for $P$ with upper triangular $i$-slices and at least one slice strictly upper triangular.
\end{proposition}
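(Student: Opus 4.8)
The plan is to exploit the hypothesis that $P$ is $i$-slice-non-singular to first put $P$ into a convenient form, then use the commutation relations to force the $i$-slices to be simultaneously triangularizable, and finally read off membership in $\mathcal D(\C)$ from the diagonals. Without loss of generality I will take $i=3$, so the $3$-slices are the matrices $P_1,\dots,P_n$. Since $P$ is $3$-slice-non-singular, after acting by some element of $GL(n,\C)$ in the third index we may assume $P_1$ is non-singular. Then acting by $(P_1^{-T},P_1^{-1},I)$ (equivalently, replacing each slice $P_j$ by $P_1^{-1}P_j$ after an appropriate transpose bookkeeping, using the slice-transformation formulas recalled in \S\ref{sec:defs}) we may assume $P_1=I$. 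At this point the commutation relations \eqref{eq:commute} for $i=3$, evaluated with suitable choices of $\mathbf v$ (in particular $\mathbf v=\mathbf e_1$, which gives $P*_3\mathbf v=P_1=I$ and $\adj(I)=I$), collapse to the ordinary matrix commutators $P_jP_k-P_kP_j=0$ for all $j,k$. Thus the slices $P_2,\dots,P_n$ form a commuting family of matrices, all commuting with $P_1=I$ as well.

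Next I invoke the standard fact that a commuting family of matrices over $\C$ can be simultaneously conjugated to upper triangular form: there is $Q\in GL(n,\C)$ with $Q^{-1}P_jQ$ upper triangular for every $j$. Realizing this conjugation as an action in the first two indices (acting by $(Q^{-T},Q,I)$) produces a $G(\C)$-orbit representative of $P$ all of whose $3$-slices are upper triangular. This establishes the first sentence of the Proposition. Now let $Z$ be the $n\times n$ matrix whose $j$th column is the diagonal of the $j$th (upper triangular) slice; equivalently, the diagonal entries $Z_{\ell j}=(P_j)_{\ell\ell}$ record, for each $\ell$, the $\ell$th simultaneous (generalized) eigenvalue across the slices. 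For a vector $\mathbf c=(c_1,\dots,c_n)$, the matrix $\sum_j c_j P_j$ is upper triangular with diagonal entries $(Z\mathbf c)_\ell$. If $Z$ is non-singular, choose $\mathbf c$ so that $Z\mathbf c$ has all entries distinct and nonzero; then $\sum_j c_j P_j$ is an upper triangular matrix with distinct nonzero eigenvalues, hence diagonalizable with no repeated eigenvalue, and this allows us to simultaneously diagonalize all the slices (commuting, and one of them having distinct eigenvalues). Acting by the diagonalizing matrix in the first two indices makes every slice diagonal, and then a diagonal scaling in the third index turns the tensor into $\Diag$ of a vector with nonzero entries; such a tensor is in $\mathcal D(\C)$ by Proposition~2 (it has $\C$-rank $n$ and multilinear rank $(n,n,n)$). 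Hence $P\in\mathcal D(\C)$.

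It remains to treat the case $Z$ singular. Pick $\mathbf c\neq 0$ with $Z\mathbf c=0$; then $M:=\sum_j c_j P_j$ is upper triangular with zero diagonal, i.e.\ strictly upper triangular. Since $\mathbf c\neq 0$, some $c_m\neq 0$; acting in the third index by an element of $GL(n,\C)$ that sends $\mathbf e_m$ to $\mathbf c$ (completing to a basis arbitrarily) replaces the $m$th slice by $M$ while keeping all slices upper triangular, and we may further re-triangularize if needed — but in fact the new slices are still $\C$-linear combinations of the old upper triangular ones, hence already upper triangular, and the $m$th is strictly upper triangular. This produces the desired orbit representative with upper triangular $3$-slices, at least one of them strictly upper triangular.

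I expect the main obstacle to be the bookkeeping around transposes in the slice-action formulas $P'_j=g_1^TP_jg_2$ versus $P''_j=\sum_k P_kg_3(k,j)$ — making sure that reducing $P_1$ to $I$, then simultaneously triangularizing, and then passing to a linear combination of slices are each realized by genuine elements of $G(\C)$ and compose correctly — together with a careful statement of exactly which choices of $\mathbf v$ in \eqref{eq:commute} are needed to extract the plain commutators $P_jP_k=P_kP_j$ (here the identity $\adj(I)=I$ and, more generally, $\adj(A)=\det(A)A^{-1}$ for invertible $A$ does the work, but one should check no spurious determinant factors obstruct the conclusion). The simultaneous triangularization of a commuting family, and the simultaneous diagonalization when one slice has distinct eigenvalues, are standard and can be cited.
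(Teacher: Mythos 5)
Your route is essentially the paper's: reduce to an identity slice, use \eqref{eq:commute} with $\mathbf v=\mathbf e_1$ to turn the relations into plain commutators, simultaneously upper-triangularize the commuting slices by an action in the first two indices, and then split on whether $Z$ is singular; your handling of the singular case (act in the third index by an invertible matrix having a null vector of $Z$ as a column, so one new slice is strictly upper triangular while all stay upper triangular) is exactly the paper's. The one genuinely different move is in the non-singular case: the paper first acts in the third index to make the second slice have distinct diagonal entries while carefully preserving $P_1=I$, and only then diagonalizes; you instead choose $\mathbf c$ with $Z\mathbf c$ having distinct entries and diagonalize the whole family using $M=\sum_j c_jP_j$, which is cleaner and avoids the bookkeeping about keeping the identity slice.

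There is, however, a concrete gap at your final step. After all slices have been made diagonal, the tensor is $D(I,I,Z')$, where $Z'$ is the matrix whose columns are the diagonals of the \emph{new} slices; a diagonal scaling in the third index does not convert this into a diagonal tensor unless $Z'$ is already diagonal. What is needed is the action of $(I,I,Z'^{-1})$ (equivalently, the observation that $D(I,I,Z')\in\mathcal D(\C)$ whenever $Z'\in GL(n,\C)$), and for that you must justify that $Z'$ is non-singular: conjugating the slices can change the matrix of diagonals, so non-singularity of $Z$ does not transfer for free. The paper supplies this by noting that singularity of the diagonals matrix is equivalent to some non-zero linear combination of the (upper triangular) slices being nilpotent, a property invariant under conjugation; in your setup one can alternatively argue that the rows of $Z'$ are the joint eigenvalue tuples of the commuting family and hence a permutation of the rows of $Z$. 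Either argument closes the gap, and with it your proof is complete. (The transposes in your reduction to $P_1=I$ are indeed off as written—acting by $(P_1^{-T},P_1^{-1},I)$ gives slices $P_1^{-1}P_jP_1^{-1}$—but, as you flagged, acting by $(P_1^{-T},I,I)$ gives $P_1^{-1}P_j$ and nothing downstream is affected.)
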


\smallskip

Theorem \ref{thm:fzero} below implies that the matrix $Z$ of this theorem is non-singular precisely when $P\in \mathcal D(\C)$.

\smallskip

\begin{proof}
For convenience, suppose $P$ is $3$-slice-non-singular, with $3$-slices $P_1$, $P_2,\dots$, $P_n$.
Then, passing to other elements in its $G(\C)$ orbit, we may first assume $P$ has a non-singular slice, and then that  it has an identity slice, say $P_1=I$. But then the commutation relations of Proposition \ref{prop:commute} with $\mathbf v=\mathbf e_1$ say that for any $1\le j,k\le n$,

$$P_j \adj(P_1) P_k-P_k\adj(P_1)P_j=0,$$
so $$P_jP_k=P_kP_j.$$
Since these parallel slices commute, they can be simultaneously upper-triangularized, by a unitary $g_1$.  Thus acting by $(g_1^T,g_1^{-1},I)\in G(\C)$, we may assume the slices $P_i$ are all upper-triangular.

Let $Z$ be the matrix whose columns are the diagonals of the slices, \emph{i.e.,} $Z(i,j)=P(i,i,j)$. 

Suppose first that $Z$ is non-singular. Then acting on $P$ by $(I,I,g_2)$ for appropriately chosen $g_2$ will preserve the upper triangular form of the slices, keep $P_1=I$, but make another slice, say $P_2$, have distinct entries on its diagonal: To see this, note that the action of $(I,I,g_2)$ on $P$  will send $Z$ to $Zg_2$. Choose some $Z'$ whose first column is $\mathbf 1$, second column has distinct entries, and is non-singular, and choose $g_2$ so $Zg_2=Z'$.
Then $P'=P(I,I,g_2)$ will have all upper triangular slices, with $\mathbf 1$ on the diagonal of $P_1'$ and distinct entries
on the diagonal of $P_2'$.  To see that in fact $P_1'=I$, for any fixed $i<j$ consider the row vector $\mathbf w_{ij}=P(i,j,\cdot)$, whose entries come from the strictly upper triangular entries of the 3-slices. Now there is some row vector $\mathbf a$ with $\mathbf w_{ij} =\mathbf a Z$. But since the first entry of $\mathbf w_{ij}$ is 0, and the first column of $Z$ is $\mathbf 1$, we see $0=\mathbf a\mathbf 1$. Thus $\mathbf w_{ij}g_2=\mathbf a Zg_2=\mathbf a Z'$ implies the first entry of $\mathbf w_{ij}g_2$ is also 0, since the first column of $Z'$ is $\mathbf 1$.

Assuming now that $P_2$ has distinct entries on its diagonal, it can be diagonalized by acting on $P$ by some $(g_3^T,g_3^{-1},I)$, without changing $P_1=I$ or the diagonal entries of $P_2$. But then the commutation of $P_2$ with all other slices shows they are also diagonal. Moreover, $Z$ being non-singular is equivalent to a statement that no non-zero linear combinations of the upper-triangular slices is nilpotent. This property is preserved by the action of $(g_3^T,g_3^{-1},I)$, and so the new
matrix $Z$ of diagonals is non-singular as well. Thus by a final action by $(I,I,Z^{-1})$, we obtain $D$.

\smallskip

If, on the other hand, $Z$ is singular when the $P_i$ are upper triangular, then there exits a $g_4\in GL(n,\C)$ with $Zg_4$ having a column of zeros.
Acting on $P$ by $(I,I,g_4)$ preserves the upper triangular form of the slices, but ensures one slice has zeros on the diagonal.
\end{proof}

\section{ Construction of invariant functions, and their behavior on $V_n$}\label{sec:construct}
In this section, for all $n>2$ we construct explicit polynomial and rational functions on the $n\times n\times n$ tensors, with invariance properties under $G(\C)$. When $n=2$ this construction gives Cayley's hyperdeterminant $\Delta$, though for larger $n$ it appears to have not been studied before. We then investigate the values these functions take on when restricted to $V_n$. Using Proposition \ref{prop:orbitrep}, the explicitness of our construction allows us to show that the non-vanishing of the functions distinguishes the orbit $\mathcal D(\C)$.

\medskip

Let $\mathbf x=(x_1,\dots, x_n)$ be a column vector of auxiliary indeterminates. For a $n\times n\times n$ tensor $P$, consider the following functions, for $i\in \{1,2,3\}$:
\begin{gather}h_i(P;\mathbf x)=\det(P*_i \mathbf x),\\
f_i(P;\mathbf x)= (-1)^{n-1} \det(H_{\mathbf x}(h_i(P; \mathbf x))).
\end{gather}
Here $\det M$  denotes the determinant of a matrix $M$, and
$H_\mathbf x$ is the Hessian operator on a scalar-valued function, giving the matrix of 2nd-order partial derivatives with respect to the indeterminates $\mathbf x$.

These  functions are polynomials in the entries of $P$ and $\mathbf x$,  homogeneous in each. Their degrees are:
\begin{gather}\deg_P(h_i)=n ,\ \deg_\mathbf x(h_i)= n,\\
\deg_P(f_i)=n^2 ,\ \deg_\mathbf x(f_i)= n(n-2).
\end{gather}

From the action of $G(\C)$ on tensors, the functions above inherit certain invariance properties.
If $P'=P(g_1,g_2,g_3)$, and $\{i,j,k\}=\{1,2,3\}$ then one sees
\begin{equation}h_i(P';\mathbf x)=\det(g_j)\det(g_k)h_i(P;g_i\mathbf x ).\label{eq:htrans}
\end{equation}
Since by the chain rule, 
$$H_\mathbf x \left (h_i(P';\mathbf x) \right )=\det(g_j)\det(g_k) g_i^T \left ( (H_\mathbf x h_i) (P;g_i \mathbf x )\right )g_i ,$$
taking determinants yields
\begin{equation}
f_i(P';\mathbf x)=\det(g_j)^n\det(g_k)^n\det(g_i)^2 f_i(P;g_i \mathbf x).\label{eq:ftrans}
\end{equation}

\medskip

\noindent
\textit {Remark.} When $n=2$, note that $f_i(P;\mathbf x)=f_i(P)$ is independent of $\mathbf x$, and can be seen to be independent of $i$ as well, by calculating its explicit formula. Moreover, $f_i(P)=\Delta(P)$ since
in this case our construction is exactly Schl\"afli's construction of the $2\times2 \times 2$ hyperdeterminant from the $2\times 2$ determinant: Since
$\det(P*_i\mathbf x)$ is a quadratic form when $n=2$, the determinant of the Hessian of this form is the same as the discriminant of the form. Schl\"afli's construction is typically presented using the discriminant  \cite{GKZ}, and that formulation then generalizes to yield (a multiple of) the hyperdeterminant for tensors of larger dimension ($2\times2\times2\times 2$, etc.). Our functions $f_i$ are a
different generalization of the construction, for which the format of the tensor is $n\times n\times n$, and does not yield hyperdeterminants.

\smallskip

\noindent
\textit {Remark.} For $n\ge3$, $f_i$ is not independent of the auxiliary indeterminates $\mathbf x$. In classical language,
such a function might be called a \emph{covariant} for the $i$th factor in $G(\C)$, or a \emph{concomitant} (see, for instance, \cite{GraceYoungbook,Littlewoodbook,Olverbook}).
Only in the case $n=2$ is $f_i$ an \emph{invariant} in the strict sense of the term
 (\emph{i.e.}, associated with a one-dimensional representation). In \S \ref{sec:reps} we will see $f_i$ is associated to a higher dimensional representation when $n>2$.

\medskip

We next use the function $f_i$ to obtain a semialgebraic description of the orbit $\mathcal D(\C)$. Since the vector $\mathbf x$ has indeterminate entries, by a statement that $f_i(P; \mathbf x)=0$ we mean that when $f_i$ is evaluated at $P$ the resulting polynomial in $\mathbf x$ is identically zero. Thus $f_i(P; \mathbf x)\ne 0$ means at least one coefficient of an $\mathbf x$-monomial is non-zero at $P$.

\smallskip

\begin{theorem}\label{thm:fzero} 
$P\in \mathcal D(\C)$ if, and only if, for some $i\in\{1,2,3\}$, $P$ satisfies the equations \eqref{eq:commute} and $f_i(P;\mathbf x)\ne 0$.
Moreover, if these conditions hold for one $i\in\{1,2,3\}$, then they hold for all.
\end{theorem}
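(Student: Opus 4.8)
The plan is to prove the two implications of the ``if and only if'' separately; the final sentence then follows formally, for if $P$ satisfies \eqref{eq:commute} and $f_i(P;\mathbf x)\ne0$ for one index $i$, the backward implication puts $P\in\mathcal D(\C)$, and the forward implication then returns both conditions for every $i$.

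For the forward implication, suppose $P=D(g_1,g_2,g_3)\in\mathcal D(\C)$ with $g_j\in GL(n,\C)$. Since $\mathcal D(\C)\subseteq V_n$, Proposition~\ref{prop:commute} already gives that $P$ satisfies \eqref{eq:commute}, so only $f_i(P;\mathbf x)\ne0$ remains. By the transformation rule \eqref{eq:ftrans}, $f_i(P;\mathbf x)$ is a nonzero constant times $f_i(D;g_i\mathbf x)$, so it suffices to evaluate $f_i$ at $D$ itself. There $h_i(D;\mathbf x)=\det\operatorname{diag}(x_1,\dots,x_n)=x_1\cdots x_n$, and a direct computation of its Hessian --- factoring $H_\mathbf x(x_1\cdots x_n)=(x_1\cdots x_n)\,\operatorname{diag}(x_k)^{-1}(J-I_n)\operatorname{diag}(x_k)^{-1}$ with $J$ the all-ones matrix, and using $\det(J-I_n)=(-1)^{n-1}(n-1)$ --- yields $f_i(D;\mathbf x)=(n-1)(x_1\cdots x_n)^{n-2}$, a nonzero polynomial. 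Replacing $\mathbf x$ by $g_i\mathbf x$ preserves non-vanishing since $g_i$ is invertible, so $f_i(P;\mathbf x)\ne0$ for all three $i$. (When $n=2$ this reads $f_i(D)=1$, matching $\Delta(D)=1$.)

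The backward implication carries the real content. Fix an $i$, say $i=3$, with $P$ satisfying \eqref{eq:commute} and $f_3(P;\mathbf x)\ne0$. Because $f_3$ is, up to sign, the determinant of the Hessian of $h_3$, the hypothesis $f_3(P;\mathbf x)\ne0$ forces $h_3(P;\mathbf x)=\det(P*_3\mathbf x)\not\equiv0$, i.e.\ $P$ is $3$-slice-non-singular, so Proposition~\ref{prop:orbitrep} applies. It produces a $G(\C)$-orbit representative with all $3$-slices upper triangular, and either the associated matrix $Z$ of slice-diagonals is non-singular, in which case $P\in\mathcal D(\C)$ and we are done, or $Z$ is singular, in which case there is an orbit representative $P'$ with upper-triangular $3$-slices at least one of which is strictly upper triangular. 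To finish, one must rule out this second alternative by showing it forces $f_3(P;\mathbf x)=0$.

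This last step is the one I expect to be the main obstacle, and it runs as follows. For such a $P'$ the pencil $P'*_3\mathbf x=\sum_k x_kP'_k$ is upper triangular, so $h_3(P';\mathbf x)$ equals the product of its diagonal entries, namely the product of the entries of $Z'\mathbf x$ where $Z'$ is the diagonals matrix of $P'$. A strictly-upper-triangular $3$-slice corresponds to a zero column of $Z'$, so $h_3(P';\mathbf x)$ is a polynomial in which some variable $x_\ell$ does not appear; hence the $\ell$-th row and column of $H_\mathbf x(h_3(P';\mathbf x))$ vanish and $f_3(P';\mathbf x)=0$ identically. Since $P'=P(g_1,g_2,g_3)$ for some element of $G(\C)$, equation \eqref{eq:ftrans} makes $f_3(P;g_3\mathbf x)$ a nonzero constant multiple of $f_3(P';\mathbf x)$, hence also identically zero, and substituting $g_3^{-1}\mathbf x$ gives $f_3(P;\mathbf x)\equiv0$, contradicting the hypothesis. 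Therefore $Z$ is non-singular and $P\in\mathcal D(\C)$, which completes the proof.
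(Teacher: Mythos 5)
Your proof is correct and follows essentially the same route as the paper's: explicit evaluation of $f_i$ at $D$ plus the transformation formula \eqref{eq:ftrans} for the forward direction, and for the converse, deducing $i$-slice-non-singularity from $f_i(P;\mathbf x)\ne 0$, invoking Proposition \ref{prop:orbitrep}, and ruling out the singular-$Z$ alternative because a slice with zero diagonal makes $h_i$ independent of some $x_\ell$, killing a row and column of the Hessian. The only cosmetic difference is your cleaner factored computation of the Hessian determinant at $D$, which reproduces the paper's value $(n-1)(x_1\cdots x_n)^{n-2}$.
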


\begin{proof}
Since $D*_i\mathbf x =\diag(x_1,x_2,\dots, x_n)$, one computes 
\begin{equation}h_i(D;\mathbf x)=x_1x_2\cdots x_n \label{eq:hiD}
\end{equation}
so
\begin{multline*}f_i(D;\mathbf x)=(-1)^{n-1}\times\\
\det 
{\small{\begin{pmatrix} 0 &x_3x_4x_5\cdots x_n&x_2x_4x_5\cdots x_n&\cdots&x_2x_3x_4\cdots x_{n-1}\\ 
x_3x_4x_5\cdots x_n&0& x_1x_4x_5\cdots x_n&\cdots&x_1x_3x_4\cdots x_{n-1}\\
\vdots& &\ddots&&\vdots\\
x_2x_3x_4\cdots x_{n-1}&x_1x_3x_4\cdots x_{n-1}&x_1x_2x_4\cdots x_{n-1}&\cdots&0
\end{pmatrix},}}
\end{multline*}
so
$$(x_1x_2\cdots x_n)^2f_i(D; \mathbf x)=(-1)^{n-1}\det\left ( (x_1x_2\dots x_n)\begin{pmatrix} 0&1&1&\cdots &1\\1&0&1&\cdots &1\\
\vdots & &\ddots& &\vdots\\
1&1&1&\cdots &0
\end{pmatrix}\right ),$$
hence
\begin{equation}\label{eq:fiD}
f_i(D; \mathbf x) 
=(n-1)
(x_1x_2\dots x_n)^{n-2}.
\end{equation}
The transformation formula \eqref{eq:ftrans} then implies that if $P\in \mathcal D(\C)$ then $f_i(P;\mathbf x)\ne 0$ for all $i$. That equations \eqref{eq:commute} hold when $P\in \mathcal D(\C)$ is stated in Proposition \ref{prop:commute}.

\smallskip

Conversely, suppose for some $i$ that $f_i(P;\mathbf x)\ne 0$ and the equations \eqref{eq:commute} hold. Then $P$ must be $i$-slice-non-singular, since $i$-slice-singularity means $h_i(P; \mathbf x)$ is the zero polynomial, which implies $f_i(P;\mathbf x)=0$. Thus Proposition
\ref{prop:orbitrep} applies, and we see $P$
is $G(\C)$-equivalent to
a tensor $P'$ with upper triangular slices in index $i$. If  such a $P'$ had a slice with diagonal $\mathbf 0$, then $\det(P'*_i\mathbf x)$ would be independent of one of the $x_i$, so its Hessian would have  a zero row (and column), implying $f_i(P';\mathbf x)$ is the zero polynomial. By the transformation property \eqref{eq:ftrans}, it would follow that $f_i(P;\mathbf x)=0$ as well. Thus the slices of $P'$ cannot have diagonals of $\mathbf 0$, and Proposition \ref{prop:orbitrep} thus shows $P\in \mathcal D(\C)$.
\end{proof}

\smallskip

Note that the above theorem is only concerned with values of the $f_i$ on the variety defined by equations \eqref{eq:commute}; it makes no statement about $f_i$ off this variety.

\smallskip
Since the variety defined by equations \eqref{eq:commute} is a supervariety of $V_n$, we immediately obtain the following.

\begin{corollary}\label{cor:f}
$P\in \mathcal D(\C)$ if, and only if, $P\in V_n$ and $f_i(P;\mathbf x)\ne 0$ for some (and hence all) $i\in\{1,2,3\}$.
\end{corollary}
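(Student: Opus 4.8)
The plan is to deduce Corollary \ref{cor:f} directly from Theorem \ref{thm:fzero}, the only extra ingredient being the observation that every point of $V_n$ automatically satisfies the commutation relations \eqref{eq:commute}. So first I would recall that, by Proposition \ref{prop:commute}, each polynomial obtained from the entries of the matrix equations \eqref{eq:commute} lies in the ideal $I(V_n)$ and hence vanishes identically on $V_n$; equivalently, $V_n$ is contained in the supervariety cut out by those polynomials. Consequently, for any $P\in V_n$ the hypothesis ``\,$P$ satisfies \eqref{eq:commute}\," of Theorem \ref{thm:fzero} holds for free.

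For the forward implication, if $P\in\mathcal D(\C)$ then $P$ lies in $V_n$, since $V_n$ is by definition the closure of $\mathcal D(\C)$; and Theorem \ref{thm:fzero}, in its easy direction (which rests on the explicit evaluation \eqref{eq:fiD} together with the transformation law \eqref{eq:ftrans}), gives $f_i(P;\mathbf x)\ne 0$ for every $i\in\{1,2,3\}$. For the converse, suppose $P\in V_n$ and $f_i(P;\mathbf x)\ne 0$ for some $i$. By the first paragraph $P$ satisfies \eqref{eq:commute}, so $P$ meets both hypotheses of Theorem \ref{thm:fzero} for that index $i$, and the theorem yields $P\in\mathcal D(\C)$. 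The parenthetical ``(and hence all)'' is precisely the last sentence of Theorem \ref{thm:fzero}.

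There is essentially no obstacle here: all the genuine content lies in Theorem \ref{thm:fzero} and Proposition \ref{prop:commute}, and the corollary is the promised ``immediate'' consequence of replacing the (a priori stronger) requirement ``satisfies \eqref{eq:commute}\," by the geometric condition ``lies in $V_n$.'' The only point to handle with care is the chain of containments $\mathcal D(\C)\subseteq V_n\subseteq\{\text{variety of }\eqref{eq:commute}\}$: the first is closure, and the second is exactly what Proposition \ref{prop:commute} asserts. I would present the argument in two or three sentences rather than as a formal multi-step proof.
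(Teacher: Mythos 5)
Your argument is correct and is essentially the paper's own: the paper deduces Corollary \ref{cor:f} from Theorem \ref{thm:fzero} in one line, noting that the variety cut out by equations \eqref{eq:commute} is a supervariety of $V_n$ (i.e., every $P\in V_n$ satisfies \eqref{eq:commute} by Proposition \ref{prop:commute}), exactly as you spell out. Nothing is missing; your version merely makes the containments $\mathcal D(\C)\subseteq V_n\subseteq\{\text{zero set of }\eqref{eq:commute}\}$ explicit.
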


\medskip

Equations \eqref{eq:hiD} and \eqref{eq:fiD} suggest consideration of the rational function
$$r_i(P;\mathbf x)=\frac {f_i(P;\mathbf x)}{(n-1)h_i(P; \mathbf x)^{n-2}},$$
which is defined on the $i$-slice-non-singular tensors, and satisfies
\begin{equation}r_i(D;\mathbf x)=1.\label{eq:rone}\end{equation}
Moreover, if $P'=P(g_1,g_2,g_3)$, then the transformation formulas \eqref{eq:htrans} and \eqref{eq:ftrans} yield
\begin{equation}
r_i(P';\mathbf x)=\det(g_1)^2 \det(g_2)^2 \det(g_3)^2r_i(P,g_i\mathbf x).\label{eq:rtrans}
\end{equation}
Since $r_i(D;\mathbf x)$ is independent of $\mathbf x$, equation \eqref{eq:rtrans} implies that $r_i(P;\mathbf x)$ is independent of $\mathbf x$ when $P\in\mathcal D(\C)$, and thus, by continuity, when $P\in V_n$.

\smallskip

\begin{corollary}\label{cor:r}
$P\in \mathcal D(\C)$ if, and only if, $P$ satisfies the equations \eqref{eq:commute} and $r_i(P;\mathbf x)$ is defined and non-zero for some (and hence all) $i\in\{1,2,3\}$.
\end{corollary}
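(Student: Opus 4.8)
The plan is to deduce Corollary \ref{cor:r} directly from Corollary \ref{cor:f} together with the transformation formula \eqref{eq:rtrans} and the normalization \eqref{eq:rone}, so that essentially no new work beyond bookkeeping is required. The forward direction is immediate: if $P\in\mathcal D(\C)$, then $P=D(g_1,g_2,g_3)$ for some $(g_1,g_2,g_3)\in G(\C)$, so $h_i(P;\mathbf x)=\det(g_j)\det(g_k)\,h_i(D;g_i\mathbf x)=\det(g_j)\det(g_k)\,(g_i\mathbf x)_1\cdots(g_i\mathbf x)_n$ is not the zero polynomial, whence $P$ is $i$-slice-non-singular and $r_i(P;\mathbf x)$ is defined; by \eqref{eq:rtrans} and \eqref{eq:rone} it equals the nonzero constant $\det(g_1)^2\det(g_2)^2\det(g_3)^2$. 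Also $P$ satisfies \eqref{eq:commute} by Proposition \ref{prop:commute}, since $P\in V_n$.

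For the converse, suppose $P$ satisfies \eqref{eq:commute} and that $r_i(P;\mathbf x)$ is defined and nonzero for some $i$. That $r_i$ is defined means $h_i(P;\mathbf x)$ is not identically zero, i.e.\ $P$ is $i$-slice-non-singular. From the definition $r_i(P;\mathbf x)=f_i(P;\mathbf x)/\bigl((n-1)h_i(P;\mathbf x)^{n-2}\bigr)$, the numerator $f_i(P;\mathbf x)$ cannot be the zero polynomial, since otherwise $r_i(P;\mathbf x)$ would be identically zero, contradicting the hypothesis; here I should be slightly careful and note that ``$r_i(P;\mathbf x)\neq 0$'' is to be read, consistently with the convention introduced just before Theorem \ref{thm:fzero} for $f_i$, as ``$r_i(P;\mathbf x)$ is not the zero rational function,'' which indeed forces $f_i(P;\mathbf x)\not\equiv 0$. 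Thus $f_i(P;\mathbf x)\neq 0$ and \eqref{eq:commute} holds, so Theorem \ref{thm:fzero} (equivalently Corollary \ref{cor:f}) gives $P\in\mathcal D(\C)$. Finally, the parenthetical ``for some (and hence all) $i$'' is inherited from the corresponding clause in Theorem \ref{thm:fzero}: if the hypotheses hold for one $i$ then $P\in\mathcal D(\C)$, and then by the forward direction just proved, $r_i(P;\mathbf x)$ is defined and nonzero for every $i\in\{1,2,3\}$.

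There is no substantive obstacle here; the only things to watch are (i) pinning down the reading of ``$r_i(P;\mathbf x)\neq 0$'' so that it matches the established convention for polynomial-valued expressions, and (ii) confirming that ``$r_i$ is defined'' is genuinely equivalent to $i$-slice-non-singularity, which is immediate from $h_i(P;\mathbf x)=\det(P*_i\mathbf x)$. One could alternatively phrase the whole argument as: on the locus where \eqref{eq:commute} holds and $P$ is $i$-slice-non-singular, the conditions ``$f_i(P;\mathbf x)\neq 0$'' and ``$r_i(P;\mathbf x)\neq 0$'' are literally the same condition, since $f_i=(n-1)h_i^{n-2}r_i$ with $h_i^{n-2}\not\equiv 0$; so Corollary \ref{cor:r} is just a restatement of Theorem \ref{thm:fzero} in terms of $r_i$.
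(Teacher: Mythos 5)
Your proposal is correct and follows exactly the route the paper intends: the corollary is stated without proof precisely because, via $f_i=(n-1)h_i^{n-2}r_i$ together with the fact that $i$-slice-singularity forces $h_i(P;\mathbf x)\equiv 0$ and hence $f_i(P;\mathbf x)\equiv 0$, the conditions on $r_i$ are equivalent to those on $f_i$ in Theorem \ref{thm:fzero}. Your care about reading ``$r_i(P;\mathbf x)\ne 0$'' as ``not identically zero in $\mathbf x$'' and about identifying ``$r_i$ defined'' with $i$-slice-non-singularity matches the paper's conventions, so nothing further is needed.
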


In this statement the condition that $P$ satisfies the equations \eqref{eq:commute} can of course be replaced by $P\in V_n$, as in Corollary \ref{cor:f}.

\medskip

While it is tempting to hope that $r_i(P;\mathbf x)$ is independent of $\mathbf x$ for all $P$,  one can verify that this is not the case even when $n=3$.
Indeed, for the tensor
$$P=\left [ \begin{pmatrix} 1&0&0\\0&0&0\\0&0&0 \end{pmatrix},\ 
  \begin{pmatrix} 0&0&0\\0&1&0\\0&0&0 \end{pmatrix},\ 
   \begin{pmatrix} 0&1&0\\1&0&0\\0&0&1 \end{pmatrix} \right ],$$ we have that
   $$h_3(P;\mathbf x)=\det \begin{pmatrix} x&z&0\\z&y&0\\0&0&z\end{pmatrix}=xyz-z^3,$$
   and
   $$f_3(P; \mathbf x)=\det \begin{pmatrix} 0&z&\phantom{-}y\\z&0&\phantom{-}x\\y&x&-6z\end{pmatrix}=2xyz+6z^3.$$
Thus $r_i(P;\mathbf x)$ is neither independent of $\mathbf x$, nor a polynomial function of $\mathbf x$. This example can easily be modified for $n\ge 3$.

\medskip

Suppose now one had a polynomial, 
$F(P),$ satisfying 
\begin{equation}
F(P')= \det(g_1)^{2k} \det(g_2)^{2k} \det(g_3)^{2k}F(P),\label{eq:Ftrans}\end{equation}
when $P'=P(g_1,g_2,g_3)$. That is, suppose  $F(P)$ is an invariant  of \emph{weight $(2k,2k,2k)$} for $G(\C)$}.
Then provided $F(D)\ne 0$, we may normalize so that $F(D)=1$, and then observe that by their transformation formulas
\begin{equation}
r_i(P)^k=F(P), \text{ for $P\in \mathcal D(\C)$}\label{eq:reqF}
\end{equation}
and thus, by continuity, 
$$f_i(P; \mathbf x)^k=(n-1)^k h_i(P;\mathbf x)^{k(n-2)}  F(P)$$  for all $P\in V_n$.
This yields the following.

\begin{proposition} \label{prop:tangles} Let $F(P)$ be an invariant of weight $(2k,2k,2k)$ for $G(\C)$, such that $F(D)\ne 0$. Then Theorem \ref{thm:fzero} and Corollary \ref{cor:f} remain true if $f_i$ is replaced by the function
$$G_i(P;\mathbf x)= h_i(P;\mathbf x)^{k(n-2)}  F(P).$$
\end{proposition}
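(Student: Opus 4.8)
Throughout I take $k\ge 1$ and $n\ge 3$; for $n=2$ the assertion is the classical fact about $\Delta$, since then $G_i=F$ is forced --- the ring of $SL(2,\C)^3$-invariants being $\mathbb C[\Delta]$ --- to be a scalar multiple of a power of $\Delta$. The plan is to rerun the proofs of Theorem~\ref{thm:fzero} and Corollary~\ref{cor:f} with a single substitution: in place of the explicit evaluation \eqref{eq:fiD} and transformation law \eqref{eq:ftrans} for $f_i$, use the polynomial identity $f_i(P;\mathbf x)^k=(n-1)^k\,G_i(P;\mathbf x)$, valid for all $P\in V_n$, which was derived just above the proposition (after normalizing $F(D)=1$; both sides are polynomials agreeing on the Zariski-dense subset $\mathcal D(\C)\subseteq V_n$). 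First I would record two elementary facts about $G_i$: from \eqref{eq:hiD} and $F(D)=1$ we get $G_i(D;\mathbf x)=(x_1\cdots x_n)^{k(n-2)}\ne 0$; and combining \eqref{eq:htrans} with \eqref{eq:Ftrans}, for $P'=P(g_1,g_2,g_3)$ and $\{i,j,k\}=\{1,2,3\}$,
$$G_i(P';\mathbf x)=\big(\det g_j\,\det g_k\big)^{k(n-2)}\big(\det g_1\,\det g_2\,\det g_3\big)^{2k}\,G_i(P;g_i\mathbf x),$$
so $G_i(P';\mathbf x)$ is identically zero in $\mathbf x$ exactly when $G_i(P;\mathbf x)$ is.

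The Corollary~\ref{cor:f} part is then immediate: on $V_n$ the identity gives $G_i(P;\mathbf x)\ne 0\iff f_i(P;\mathbf x)\ne 0$, so the characterization, with its ``for some, hence all, $i$'' clause, transfers verbatim. For the Theorem~\ref{thm:fzero} part, the forward implication is equally immediate: if $P\in\mathcal D(\C)$ then $P$ satisfies \eqref{eq:commute} by Proposition~\ref{prop:commute}, and writing $P=D(g_1,g_2,g_3)$ the displayed law together with $G_i(D;\mathbf x)\ne 0$ gives $G_i(P;\mathbf x)\ne 0$. For the converse, suppose $P$ satisfies \eqref{eq:commute} and $G_i(P;\mathbf x)\ne 0$ for some $i$. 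Since $G_i=h_i^{k(n-2)}F$ with $k(n-2)\ge 1$, this forces $h_i(P;\mathbf x)\not\equiv 0$ --- so $P$ is $i$-slice-non-singular --- and $F(P)\ne 0$. Proposition~\ref{prop:orbitrep} then produces a representative of $P$ with all $i$-slices upper triangular; if the matrix $Z$ of their diagonals is non-singular we conclude $P\in\mathcal D(\C)$ and are done, and otherwise we obtain a representative $P''$ with all $i$-slices upper triangular, at least one of them --- say the $j_0$-th --- strictly upper triangular.

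It remains to rule out this last possibility, and this is the step I expect to take the most care. Showing $F(P'')=0$ suffices: since $F(P'')=(\det g_1\det g_2\det g_3)^{2k}F(P)$ for the $g$ with $P''=P(g_1,g_2,g_3)$, it would force $F(P)=0$, contradicting $F(P)\ne 0$. For $f_i$ the corresponding step was free --- a strictly upper triangular slice makes $h_i(P'';\mathbf x)$ independent of $x_{j_0}$, so its Hessian has a zero row and $f_i(P'';\mathbf x)\equiv 0$ --- whereas here one must genuinely control the scalar $F(P'')$, and I would do this by a one-parameter degeneration designed so that $F$ stays constant along it. Take $d_t=\operatorname{diag}(t^{n-1},t^{n-2},\dots,t,1)$ and set $Q_t=P''(d_t^{-1},d_t,I)$, whose $l$-th $i$-slice is $d_t^{-1}\,(P''*_i\mathbf e_l)\,d_t$. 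Because the $i$-slices of $P''$ are upper triangular, the $(a,b)$-entry of this slice equals $t^{a-b}$ times the $(a,b)$-entry of $P''*_i\mathbf e_l$, with $a\le b$; hence $Q_t$ converges as $t\to\infty$ to a tensor $\widehat P$ whose $i$-slices are the diagonal parts of those of $P''$, so in particular $\widehat P$ has a vanishing $j_0$-th $i$-slice. On the other hand $\det(d_t^{-1})^{2k}\det(d_t)^{2k}\det(I)^{2k}=1$, so \eqref{eq:Ftrans} gives $F(Q_t)=F(P'')$ for all $t\ne 0$, and letting $t\to\infty$ and using continuity of $F$ yields $F(P'')=F(\widehat P)$.

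Finally I would check that $F$ vanishes on any tensor $Q$ whose $j_0$-th $i$-slice is zero: acting in the $i$-th index by $h_s=\operatorname{diag}(s^{-1/(n-1)},\dots,s,\dots,s^{-1/(n-1)})$ with the entry $s$ in slot $j_0$, so $h_s\in SL(n,\C)$, multiplies the $j_0$-th $i$-slice of $Q$ by $s$ (leaving it zero) and the others by $s^{-1/(n-1)}$; since $\det h_s=1$, \eqref{eq:Ftrans} gives $F(Q*_ih_s)=F(Q)$, while $Q*_ih_s\to 0$ as $s\to\infty$, so $F(Q)=F(0)=0$ because $F$ is homogeneous of positive degree $2kn$. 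Applying this to $\widehat P$ gives $F(P'')=F(\widehat P)=0$; hence the bad case cannot occur, $Z$ is non-singular, and $P\in\mathcal D(\C)$, completing the argument. The main obstacle, then, is precisely the construction of the weight-preserving degeneration $Q_t$ (and the auxiliary vanishing of $F$ on tensors with a missing slice), which replaces the automatic vanishing of the Hessian that was available for $f_i$.
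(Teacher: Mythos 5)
Your proof is correct, and for Corollary \ref{cor:f} and the forward implication of Theorem \ref{thm:fzero} it follows the paper exactly: the paper's entire proof consists of the identity $f_i(P;\mathbf x)^k=(n-1)^k h_i(P;\mathbf x)^{k(n-2)}F(P)$ on $V_n$ (matching weights and values at $D$, then extending from $\mathcal D(\C)$ by continuity), after which the proposition is simply asserted to follow. Where you genuinely diverge is the converse of the Theorem \ref{thm:fzero} version. Its hypothesis is the commutation relations \eqref{eq:commute}, which cut out a set containing, but not known to equal, $V_n$ (the paper notes these relations give $I(V_n)$ only up to saturation and radical; any tensor with pairwise commuting $i$-slices satisfies them for that $i$), so the identity on $V_n$ alone does not dispose of a point off $V_n$ with $G_i\ne 0$. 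For $f_i$ this step was free in the paper via the Hessian acquiring a zero row in the $Z$-singular branch of Proposition \ref{prop:orbitrep}, and that mechanism has no automatic analogue for $G_i$; your replacement---the degeneration $P''(d_t^{-1},d_t,I)$, which has trivial weight so $F$ is constant along it and limits to the diagonal-slices tensor $\widehat P$ with a zero slice, followed by the $SL(n,\C)$ scaling plus positive homogeneity of $F$ to get $F(\widehat P)=0$---is sound (the slice transformation rules and determinant bookkeeping check out), and it yields a complete proof of the Theorem version without needing the commutation locus to coincide with $V_n$ on the slice-nonsingular locus. Two small remarks: for $n=2$ you do not need the structure of the $SL(2,\C)^{3}$ invariant ring, since $V_2$ is all of the $2\times2\times2$ tensors and the normalized identity $\Delta^k=F$ already holds identically; and in the $Z$-singular branch one could alternatively observe that $\widehat P=\sum_a \mathbf e_a\otimes\mathbf e_a\otimes\mathbf z_a$ lies in $V_n\smallsetminus\mathcal D(\C)$ and invoke the identity to force $F(\widehat P)=0$, but this only works when no row of $Z$ vanishes (else $h_i(\widehat P;\mathbf x)\equiv 0$), so your direct scaling argument is the cleaner and more complete route.
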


While it is relatively straightforward to investigate the existence of polynomial invariants with weights of the type required for  $F(P)$ for small $n$, it is less easy to give them explicitly. 
We discuss this further in the next section.

\section{Representations and $n\times n\times n$ tensors}\label{sec:reps}
The previous section took an explicit, constructive approach to defining the invariants $f_i$. Here we turn to a 
 more general understanding of the representation theory of $G(\C)$. In particular, the transformation formula \eqref{eq:ftrans} of $f_i$ and Proposition \ref{prop:tangles} indicate that studying all polynomials with good transformation properties under the group action might be useful. As a general background to the material in this section, we suggest \cite{FultonHarrisbook,goodman2003,macdonald1979}

 \subsection{Representations and decompositions}
 
 As we will be concerned only with complex representations of complex groups, we suppress mention of $\C$ in our notation in this section. 
 We also let $V\cong \mathbb{C}^n$ (which should not be confused with our use of of $V_n$ for an algebraic variety in other sections).
 
 Recall that a \emph{representation} of a group $G$ is a homomorphism $\rho: G\rightarrow GL(W)$. If $W$ has no proper subspaces that are invariant under the action of $G$, then $\rho$ is said to be \emph{irreducible}. In particular, the irreducible representations of the general linear group on $V\cong \mathbb{C}^n$ are well understood to be
\[
\rho_\lambda: GL(V) \cong GL(n,\mathbb{C}) \rightarrow GL(V^\lambda) \cong GL(n_\lambda,\mathbb{C}),
\]
where the $\rho_\lambda$ are labelled by integer partitions $\lambda = (\lambda_1,\lambda_2,\cdots, \lambda_\ell)$, with $\lambda_1\geq \lambda_2 \geq \ldots \ge\lambda_\ell > 0$.
We say $\lambda$ is a partition of $m$, and write  $\lambda \vdash m$ and $|\lambda|=m$, when $\sum_{i=1}^{\ell}\lambda_i=m$. We refer to $\ell$ as the \emph{depth} of $\lambda$.
The representing space $V^\lambda\cong \mathbb{C}^{n_\lambda}$ (also referred to as a $G$-module), of dimension $n_\lambda$, can be expressed using the Schur functors $V^{\lambda}=\mathbb{S}_{\lambda}\left(V\right)$, but it is usually simpler to avoid explicitly doing so, and instead work with the characters of the representations.  
The characters $\left\{\lambda\right\}=\tr\circ \rho_\lambda$ are given by the Schur functions $s_\lambda$ \cite{Littlewoodbook}, with
\[
\left\{\lambda\right\}(g)=s_{\lambda}(\xi),
\] 
where $\xi= (\xi_1,\xi_2,\cdots,\xi_n)$ are class parameters (eigenvalues) for $g$.
Crucially, the Schur functions can be defined, and their properties explored, in a combinatorial manner quite independently of their role as characters for the general linear group \cite{macdonald1979}.

The dimension $n_\lambda=\{\lambda\}(I)$ of the representation $\rho_\lambda$ can be calculated by the hook length formula, which counts the number of semi-standard tableaux of shape $\lambda$. For instance, the defining representation of $GL(V)$ is associated to the partition $(1)$, with $n_{(1)}=n$, so $\{1\}$ denotes its trace and $s_{(1)}(\xi)=\xi_1+\xi_2+\cdots+\xi_n$.

From two representations $ \rho_\lambda$ and $\rho_{\lambda'} $ of $GL(V)$, one can construct the tensor product representation $(\rho_\lambda \otimes \rho_{\lambda'}) (g) := \rho_\lambda (g) \otimes \rho_{\lambda'} (g)$, with character denoted $\{\lambda\}\otimes\{\lambda'\}$. While this representation may be reducible, its decomposition into irreducible representations of $GL(V)$ can be found using the pointwise product of Schur functions:
$$
(\{\lambda\}\otimes \{\lambda'\})(g) = s_{\lambda}(\xi)s_{\lambda'}(\xi) = \sum_{\alpha}c^{\alpha}_{\lambda\lambda'}s_{\alpha}(\xi)
$$
Here the multiplicities $c^\alpha_{\lambda\lambda'}$ are computable using the Littlewood-Richardson rule \cite{macdonald1979}, with, for instance, software such as \texttt{Schur} \cite{schur}.

The irreducible representations of $GL(n_1)\times GL(n_2)\times GL(n_3)$ are  tensor products of the irreducible representations of the $GL(n_i)$: 
\[\rho_{\lambda_1}\times \rho_{\lambda_2} \times \rho_{\lambda_3}: GL(n_1)\times GL(n_2)\times GL(n_3)\rightarrow GL(n_{\lambda_1})\times GL(n_{\lambda_2})\times GL(n_{\lambda_3})\] 
where $(\rho_{\lambda_1} \times \rho_{\lambda_2} \times \rho_{\lambda_3})(g_1,g_2,g_3):= \rho_{\lambda_1}(g_1) \otimes \rho_{\lambda_2}(g_2) \otimes \rho_{\lambda_3}(g_3)$.
We denote the character of this representation by $\{\lambda_1\}\times \{\lambda_2\}\times \{\lambda_3\}$, to distinguish it from the product of characters of the sort described in the last paragraph.

The transformation of tensors $P\in U\cong  V\otimes V\otimes V$ under elements $g=(g_1,g_2,g_3)$ of $G=G(\C)=GL(n,\C) \!\times\! GL(n,\C)\! \times\! GL(n,\C)$ gives a representation of $G$ with character $\{1\}\times \{1\}\times \{1\}$:
\[ \tr(g_1\otimes g_2\otimes g_3)= \tr(g_1)\tr(g_2)\tr(g_3)=
s_{(1)}(\xi)s_{(1)}(\xi')s_{(1)}(\xi'').
\]
Denote the space of homogeneous polynomials of degree $d$ in the components of tensors $P\in U$ by ${\mathbb C}{[}U{]}{}_d$. 
This space inherits an action of $G$ by
\[
 f \mapsto g\circ f,  
\]
where $g\circ f(P):=f(P(g_1,g_2,g_3))$, and hence forms a $G$-module.
By a standard argument, it is possible to identify ${\mathbb C}[U]_d\cong U^{(d)}=\mathbb{S}_{(d)}\left(U\right)$.
While  $\mathbb C[U]_d$ is usually not an irreducible $G$-module, through characters we can identify its decomposition into irreducible modules.
This is done by applying the corresponding Schur function plethysm, denoted by $\underline \otimes$, 
\begin{equation}
 \left (\{1\}\times \{1\}\times \{1\} \right) \underline{\otimes} \{d\}=\sum_{\sigma_1,\sigma_2,\sigma_3\vdash n}\gamma^{(d)}_{\sigma_1\sigma_2\sigma_3}\{\sigma_1\}\times \{\sigma_2\}\times \{\sigma_3\},\label{eq:3pleth}
\end{equation}
where the multiplicities $\gamma_{\sigma_1\sigma_2\sigma_3}^{(d)}$ can be calculated using standard group theory techniques implemented in software such as \texttt{Schur} \cite{schur}. (See below for an outline, and \cite{sumner2008} for a more complete explanation.)
In terms of $G$-modules, we can use (\ref{eq:3pleth}) to identify
\begin{equation}
\mathbb{C}\left[U\right]_d\cong \bigoplus_{\sigma_1,\sigma_2,\sigma_3\vdash n}\gamma^{(d)}_{\sigma_1\sigma_2\sigma_3} V^{\sigma_1}\otimes V^{\sigma_2}\otimes V^{\sigma_3}.
\end{equation}
The primary focus of this section is to relate the functions $f_i(P;\mathbf x)$ to this decomposition.

\smallskip

The coefficients in a plethysm formula such as equations \eqref{eq:3pleth} are the structure constants for Schur function ``inner'' products, denoted by $*$:
\[
\{\alpha \}\ast\{ \beta \} = \begin{cases} \sum_{\mu\vdash n}\gamma^\mu_{\alpha \beta}\{\mu\}, & \text{ if $|\alpha|=|\beta|=n$,} \\ 0, & \text{ otherwise},  \end{cases}
\]
where $\gamma^\mu_{\alpha \beta}$ is the multiplicity of the irreducible representation $\{\mu\}$ occurring in the decomposition of the tensor product of the irreducible representations $\{\alpha\}$ and $\{\beta\}$ in the symmetric group $\mathfrak{S}_n$. 
By linearity and associativity, we can similarly define 
\[
\{\alpha \}\ast\{ \beta \}\ast \cdots \ast\{\zeta \}
= \sum_{\mu \vdash n}  \gamma^\mu_{\alpha \beta \cdots \zeta }  \{\mu\},
\]
and the general plethysm of a product of defining representations is
\begin{equation}
(\{1\}\times \{1\} \times \cdots \times \{1\}) \underline{\otimes} \{\mu\} = \sum_{\alpha,\beta,\cdots,\zeta \, \vdash\, |\mu|} 
\gamma^\mu_{\alpha \beta \cdots \zeta} \{\alpha  \}\times  \{\beta \}\times \cdots\times \{ \zeta\}.\label{eq:pleth}
\end{equation}
Equation \eqref{eq:3pleth} is then the special case of a three-fold  product.

\smallskip

\noindent
\textit{Remark.} If in equation \eqref{eq:pleth} the characters $\{1\}$ are replaced by  $\{\rho\} ,\{\sigma \} ,\{\tau \}, \cdots$, then the expansion on the right-hand side of equation \eqref{eq:pleth} would be over the respective plethysms,  
$(\{\rho\}\underline{\otimes} \{\alpha  \}), (\{\sigma \}\underline{\otimes}\{\beta \}), (\{\tau \}\underline{\otimes}\{\mu \}), \cdots$. The simpler case of equation \eqref{eq:pleth} arises since $\{1\}\underline{\otimes} \{\alpha  \} \equiv \{\alpha  \}$,  $\{1\}\underline{\otimes} \{\beta  \} \equiv \{\beta  \}$, etc.

\smallskip

\noindent
\textit{Remark.} A familiar application of this theory is given by considering the $k\times k$ minors of an $n\times n$ matrix $A$.
Under the action of $GL(n)\times GL(n): A\mapsto g_1Ag_2^T$, for each integer $1\leq k\leq n$, the span of the $k\times k$ minors of $A$ is an invariant subspace of the homogeneous polynomials of degree $k$ in the entries of $A$.
In terms of Schur function characters, the minors must therefore appear in the decomposition of the plethysm
\begin{equation}
\left(\left\{1\right\}\times \left\{1\right\}\right)\underline{\otimes}\left\{k\right\}=\sum_{\alpha,\beta \vdash k}\gamma^{(k)}_{\alpha\beta}\left\{\alpha\right\}\times\left\{\beta\right\}.\label{eq:2pleth}
\end{equation}
Adopting the standard shorthand of using exponents to signify repeated integers in a partition,
$(k)$ and $(1^k)=(1,1,\dots,1)$ label the one-dimensional ``trivial'' and ``sign'' representations of the symmetric group $\mathfrak{S}_k$, respectively. The calculation $\gamma^{(k)}_{(1^k)(1^k)}=1$ follows immediately from the tensor product $sgn(\sigma)\otimes sgn(\sigma)=sgn(\sigma)sgn(\sigma)=1$ for all $\sigma\in \mathfrak{S}_k$.

Thus the character $\left\{1^k\right\}\times \left\{1^k\right\}$ appears exactly once in the above expansion. 
Via the hook length formula one finds $\dim\left (\rho_{(1^k)}\times \rho_{(1^k)}\right )=\dim(\rho_{(1^k)})^2={n \choose k}^{2}$, and
the associated irreducible subspace is confirmed to be that spanned by the $k\times k$ minors of $A$.

\subsection{One-dimensional representations}
As a first use of this viewpoint, we investigate polynomial invariants of $G(\C)$, that is, one-dimensional representations within the vector space of polynomial functions. Proposition \ref{prop:tangles} indicates that
any polynomial $F$ satisfying equation \eqref{eq:Ftrans} for any positive integer $k$ can be used in characterizing the points on $\mathcal D(\C)$ (provided $F(D)\ne 0$), so we seek to find such $F$.
 
 The one-dimensional representation $\det(g)^k$ of $GL(n)$ has character
$\{k^n\}$. 
Via calculations with \texttt{Schur} \cite{schur}, by decomposing $\mathbb C\left[U\right]$ for $n=2,3,4$, we find there are polynomials in the entries of $P$ transforming as $\{2^2\}\times \{2^2\}\times \{2^2\}$, $\{2^3\}\times \{2^3\}\times \{2^3\}$, $\{2^4\}\times \{2^4\}\times \{2^4\}$ of degree $d=4,6,8$ for $n=2,3,4$ respectively. Since the multiplicities of the representation in the decomposition are found to be
\[
1=\gamma^{(4)}_{(2^2)(2^2)(2^2)}=\gamma^{(6)}_{(2^3)(2^3)(2^3)}=\gamma^{(8)}_{(2^4)(2^4)(2^4)},
\]
the polynomial is uniquely determined up to scaling. Thus for $n=2,3,4$ we denote this function by $\tau_n$, and call it the \emph{$n$-tangle}, since Cayley's hyperdeterminant $\tau_2=\Delta$ is called the tangle in the physics literature, and all transform with weight $(2,2,2)$. (We have not yet fixed a choice of scaling for $\tau_3,\tau_4$, but will below.)
This progression stops with $n=4$, but for $n=5,6$ there are invariants transforming with weight $(3,3,3)$, which again occur with multiplicity 1, and so are unique up to scaling.
These results are summarized in Table \ref{table:multi}.
Beyond $n=7$, computations with \texttt{Schur} become prohibitive, although we verified that there are no weight (2,2,2) representations for  $8\le n\le 16$. 

\begin{table}[h]
\begin{center}
\begin{tabular}{c|ccc}
Tensor& & Weight \\
format & (2,2,2) & (3,3,3) & (4,4,4)\\
\hline
$2\times2\times 2$ & $1$ & $0$ & $1$ \\
$3\times3\times 3$ & $1$ & $1$ & $2$\\
$4\times4\times 4$ & $1$ & $1$ & $5$\\
$5\times 5\times 5$ & $0$ & $1$ & $6$\\
 $6\times 6\times 6$& $0$ & $1$\\
$7\times 7\times 7$ & $0$ & $0$
\end{tabular}
\end{center}
\caption{Multiplicities  $\gamma_{(k^n)(k^n)(k^n)}^{(nk)}$ of one-dimensional representations of weight (k,k,k) in the space of homogeneous polynomials of degree $nk$ in the entries of tensors of format $n\times n\times n$, as computed by \tt{Schur}.}\label{table:multi}
\end{table}

\medskip
While the existence of invariants with the desired transformation property for $n<6$ is now established, to use them in Proposition \ref{prop:tangles} requires that we also know they do not vanish at $D$. Just as this can be easily seen from the explicit formula \eqref{eq:hyper} for the tangle when $n=2$, to establish this in the cases $n=3,4$ we turn to an explicit construction of the invariant.

For $n=3$, let $\epsilon_{ijk}$ be the antisymmetric Levi-Civita tensor with $\epsilon_{123}=\epsilon_{231}=\epsilon_{321}=1$,  $\epsilon_{213}=\epsilon_{321}=\epsilon_{123}=-1$, and $\epsilon_{ijk}=0$ otherwise. 
Consider the degree 6 polynomial $\tau_3:U\rightarrow \mathbb{C}$ defined by
\begin{multline*}
\tau_3(P)=\sum_{1}^3P_{i_1i_2i_3}P_{j_1j_2j_3}P_{k_1k_2k_3}P_{l_1l_2l_3}P_{m_1m_2m_3}P_{n_1n_2n_3}\times\\\epsilon_{i_1j_1k_1}\epsilon_{j_2k_2l_2}\epsilon_{k_3l_3m_3}\epsilon_{l_1m_1n_1}\epsilon_{m_2n_2i_2}\epsilon_{n_3i_3j_3},\nonumber
\end{multline*}
where all 18 indices run from 1 to 3 in the sum. Since $\epsilon_{ijk}$ defines a one-dimensional representation of $GL(3)$ by
\begin{align}
\sum_{1\leq i',j',k' \leq 3}g(i,i')g(j,j')g(k,k')\epsilon_{i'j'k'}=\det(g)\epsilon_{ijk},\quad \text{ for }{g\in GL(3,\mathbb{C})},\nonumber
\end{align}
it is straightforward to check that
$$\tau_3(P(g_1,g_2,g_3))=\det(g_1)^2\det(g_2)^2\det(g_3)^2\tau_3(P)$$ for all $(g_1,g_2,g_3)\in G(\mathbb{C})$.

As is noted in \cite{sumner2006},  expanding $\tau_3$ as a polynomial yields 1152 terms, and thus it is not the zero polynomial. But we wish to establish the stronger statement that $\tau_3(D)\ne 0$.
Expressing $D$ in components $D_{ijk}=\delta_{ij}\delta_{jk}$, one first finds
\begin{align}
\tau_3(D)=\sum_{1\leq i,j,k,l,m,n\leq 3}\epsilon_{ijk}\epsilon_{jkl}\epsilon_{klm}\epsilon_{lmn}\epsilon_{mni}\epsilon_{nij}.\nonumber
\end{align}
Now the first factor in the summand, $\epsilon_{ijk}$, is zero unless $i,j,k\in\{1,2,3\}$ are distinct. Then the product of the first two factors is zero unless additionally $l=i$.  Considering the remaining four $\epsilon$ factors in this way, non-zero contributions also require $m=j$, and $n=k$. Thus
$$
\tau_3(D)=\sum_{i,j,k \text{ distinct} }\epsilon_{ijk}^2\epsilon_{jki}^2\epsilon_{kij}^2 =6.$$
This confirms both that $\tau_3$ is a non-zero polynomial and that it evaluates to a positive value on the diagonal tensor.

Similar considerations give the 4-tangle. With $\epsilon_{ijkl}$ denoting the sign of the permutation $(ijkl)$ when $i,j,k,l$ are distinct, and 0 otherwise, let
\begin{align}
\tau_4(P)=\sum_1^4 P_{i_1i_2i_3}P_{j_1j_2j_3}P_{k_1k_2k_3}P_{l_1l_2l_3}P_{m_1m_2m_3}P_{n_1n_2n_3}P_{r_1r_2r_3}P_{s_1s_2s_3}\times\nonumber\\
\hspace{6em}\epsilon_{i_1j_1k_1l_1}\epsilon_{m_1n_1r_1s_1}\epsilon_{i_2l_2m_2s_2}\epsilon_{j_2k_2n_2r_2}\epsilon_{i_2j_3m_3n_3}\epsilon_{k_3l_3r_3s_3},\nonumber
\end{align} where all
24 indices run from 1 to 4 in the sum.
A polynomial expansion of this has 431,424 terms.
By an argument analogous to that for $\tau_3$, one sees that the only nonzero terms in
$$\tau_4(D)=\sum_1^ 4 \epsilon_{ijkl}\epsilon_{mnrs}\epsilon_{ilms}\epsilon_{jknr}\epsilon_{ijmn}\epsilon_{klrs}$$
occur when $m=k,n=l,r=i,s=j$,
and thus that
$\tau_4(D)=24$,
confirming that $\tau_4(D)$ is also non-zero.

We do not have an explicit construction of invariants for $n=5,6$ that have weight $(3,3,3)$, and we therefore do not know whether they vanish at $D$.

\subsection{Higher-dimensional representations}

The functions $f_i$ constructed in \S \ref{sec:construct} are not invariants when $n>2$, due to their dependence on the auxiliary variables $\mathbf x$, and thus do not define one-dimensional representations of $G(\C)$.
We therefore turn to studying higher dimensional representations in polynomials.

A consequence of considering irreducible modules of polynomials is that statements concerning polynomials vanishing on $G(\C)$-invariant sets which apply to  a specific element of the module must also apply to the module as a whole.                                                                                    
This is formalized in the two following lemmas.

\begin{lemma}\label{lem:Gspan}
Let $f\in\C[U]_d$. Then $\left \langle \{ g\circ f: g\in G\} \right \rangle_\C,$ the linear span of the $G$-orbit of $f$, is a $G$-module.
In particular, if $W$ is an irreducible submodule of $\C[U]_d$, and $0\ne f\in W$, then $\left \langle \{ g\circ f: g\in G\} \right \rangle_\C=W.$
\end{lemma}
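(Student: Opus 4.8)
The statement has two parts, and the first immediately implies the second, so the plan is to prove the first and then deduce the second. For the first part, I want to show that $M := \langle \{ g\circ f : g\in G\}\rangle_\C$ is a $G$-module, i.e. that $M$ is invariant under the action $f'\mapsto g\circ f'$ of $G$ on $\C[U]_d$. The key routine observation is that the map $f\mapsto g\circ f$ is a left action: for $g,h\in G$ one has $g\circ(h\circ f) = (gh)\circ f$ (or possibly $(hg)\circ f$ depending on the convention for composing the right action on tensors — either way it is a genuine action), which follows directly from the definition $g\circ f(P) = f(P(g_1,g_2,g_3))$ together with the fact that $G$ acts on tensors on the right. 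Granting this, for any $h\in G$ and any generator $g\circ f$ of $M$ we have $h\circ(g\circ f) = (hg)\circ f \in \{g'\circ f : g'\in G\}$, so $h$ maps the spanning set of $M$ into $M$; since the $G$-action is linear, $h$ then maps all of $M$ into $M$. Hence $M$ is $G$-invariant, and being a subspace of the finite-dimensional space $\C[U]_d$ it is a $G$-module.

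For the second part, suppose $W\subseteq\C[U]_d$ is an irreducible submodule and $0\ne f\in W$. Since $W$ is $G$-invariant, every $g\circ f$ lies in $W$, hence $M = \langle\{g\circ f\}\rangle_\C\subseteq W$. On the other hand $M$ is a nonzero $G$-submodule of $\C[U]_d$ (nonzero because it contains $f = e\circ f$ where $e$ is the identity of $G$), and $M\subseteq W$. But $W$ is irreducible, so its only $G$-submodules are $0$ and $W$ itself; since $M\ne 0$ we conclude $M = W$.

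The whole argument is short and essentially formal — the only thing one must be slightly careful about is getting the action convention right so that the orbit of $f$ is closed (up to taking linear spans) under the $G$-action, and recording that $G$-invariance of a subspace together with finite-dimensionality is exactly what ``$G$-module'' means here. I do not anticipate any real obstacle; if anything, the point worth stating explicitly is that the map $f'\mapsto g\circ f'$ is $\C$-linear on $\C[U]_d$, which is immediate from the pointwise definition of the action and of addition/scalar multiplication of polynomials, and which is what lets us pass from ``$h$ sends the spanning set into $M$'' to ``$h$ sends $M$ into $M$.''
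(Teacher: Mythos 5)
Your proof is correct and follows essentially the same route as the paper: you show the span of the orbit is $G$-invariant by noting that acting by $h$ sends each generator $g\circ f$ to $(hg)\circ f$ (the paper writes this as $g\circ p=\sum_h c_h\,(gh\circ f)$), and then deduce the second claim from irreducibility exactly as the paper does. Your side remark about the composition convention is harmless, since with the right action on tensors one indeed gets $g\circ(h\circ f)=(gh)\circ f$, and either convention keeps the orbit (hence its span) stable.
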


\begin{proof} If $p\in\left \langle \{ g\circ f: g\in G\} \right \rangle_\C$, then, for some finite subset $S\subset G$ and $c_h\in \mathbb{C}$,
$$p=\sum_{h\in S}c_h\left(h\circ f\right),
$$
Thus if $g\in G$,
\[
g\circ p=\sum_{h\in S}c_h\left(gh\circ f\right)\in \left \langle \{ g\circ f: g\in G\} \right \rangle_\C,
\]so the linear span of the orbit is a $G$-module.

Now for any $G$-module $W$ and $f\in W$, $\left \langle \{ g\circ f: g\in G\} \right \rangle_\C\subseteq W.$ Irreducibility of $W$ and $f\ne0$ thus implies
$\left \langle \{ g\circ f: g\in G\} \right \rangle_\C=W$.
\end{proof}

\smallskip

Although as stated here this lemma applies to $f$ in a $G$-module of polynomials, the result is a standard one for any $G$-module.
Though also stated for polynomials, the next result holds more generally for $G$-modules of functions where the action of $G$ arises from an action on their domain.
\smallskip

\begin{lemma}\label{lem:Gvanish}
Let $\mathcal S$ be a $G$-invariant subset of $U$, and $W\subseteq \C[U]_d$ an irreducible $G$-module. 
Then $f|_\mathcal S\equiv 0$ for some non-zero $f\in W$ if, and only if, $p|_\mathcal S\equiv 0$ for all $p\in W$ .
\end{lemma}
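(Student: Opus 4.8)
The plan is to prove the nontrivial direction: assuming $f|_{\mathcal S}\equiv 0$ for some single nonzero $f\in W$, deduce $p|_{\mathcal S}\equiv 0$ for every $p\in W$. The reverse implication is immediate since $W$ contains a nonzero element. First I would invoke Lemma \ref{lem:Gspan}: because $W$ is irreducible and $f\neq 0$, the linear span of the $G$-orbit $\{g\circ f : g\in G\}$ equals all of $W$. Hence every $p\in W$ can be written as a finite linear combination $p=\sum_{h\in S}c_h\,(h\circ f)$ with $S\subset G$ finite and $c_h\in\C$.

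Next I would use that $\mathcal S$ is $G$-invariant. For any $h\in G$ and any $P\in\mathcal S$, we have $P(h_1,h_2,h_3)\in\mathcal S$, so by hypothesis $f(P(h_1,h_2,h_3))=0$; that is, $(h\circ f)(P)=0$ for all $P\in\mathcal S$, i.e. $(h\circ f)|_{\mathcal S}\equiv 0$ for every $h\in G$. Therefore $p|_{\mathcal S} = \sum_{h\in S} c_h\,(h\circ f)|_{\mathcal S}\equiv 0$. This gives the claim for all $p\in W$.

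There is essentially no obstacle here; the argument is a routine combination of Lemma \ref{lem:Gspan} with the $G$-invariance of $\mathcal S$. The only point worth care is to note, as the paragraph preceding the lemma already indicates, that the same reasoning applies verbatim to any $G$-module of functions on $U$ whose $G$-action comes from the action on the domain $U$ — nothing about polynomiality is used beyond having a well-defined action $g\circ f(P)=f(P(g_1,g_2,g_3))$.

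\begin{proof}
If $p|_{\mathcal S}\equiv 0$ for all $p\in W$, then in particular this holds for some nonzero $p\in W$, since $W\ne 0$.

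Conversely, suppose $f\in W$ is nonzero with $f|_{\mathcal S}\equiv 0$. By Lemma \ref{lem:Gspan}, $W=\left\langle\{g\circ f: g\in G\}\right\rangle_\C$, so any $p\in W$ may be written as $p=\sum_{h\in S}c_h(h\circ f)$ for some finite $S\subset G$ and $c_h\in\C$. Now fix $h\in G$ and $P\in\mathcal S$. Since $\mathcal S$ is $G$-invariant, $P(h_1,h_2,h_3)\in\mathcal S$, and therefore $(h\circ f)(P)=f(P(h_1,h_2,h_3))=0$. As $P\in\mathcal S$ was arbitrary, $(h\circ f)|_{\mathcal S}\equiv 0$ for every $h\in G$. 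Hence for any $P\in\mathcal S$,
\[
p(P)=\sum_{h\in S}c_h(h\circ f)(P)=0,
\]
so $p|_{\mathcal S}\equiv 0$.
\end{proof}
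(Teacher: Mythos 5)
Your proof is correct and follows the same route as the paper: use Lemma \ref{lem:Gspan} to express any $p\in W$ as a finite linear combination of translates $h\circ f$, then use $G$-invariance of $\mathcal S$ to see each $(h\circ f)(P)=f(Ph)$ vanishes for $P\in\mathcal S$. Nothing further is needed.
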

\begin{proof}
Let  $f\in W$ with $f|_\mathcal S\equiv 0$. Then for any $p\in W$, $P\in U$, by the preceding lemma
$$p(P)=\sum_{h\in S}c_h\left(h\circ f\right)(P)=\sum_{h\in S}c_h\,f(Ph).$$
But $P\in \mathcal S$ implies $Ph\in\mathcal S$, so this shows $p|_\mathcal S\equiv 0$.
\end{proof}

\smallskip

There are several $G$-invariant sets of interest in this paper. They are $\mathcal D(\C)$, the orbit of the tensor $D$; $V_n=\overline{\mathcal D(\C)}$, the orbit closure; and $V_n\smallsetminus \mathcal D(\C)$, the complement of the orbit in its closure. However, by continuity, polynomials that vanish on $\mathcal D(\C)$ vanish on its closure $V_n$ as well, so investigating  polynomials vanishing on either of these sets  leads to polynomials in the defining ideal of the variety $V_n$.  Indeed, some such polynomials are given in Proposition $\ref{prop:commute}$, though not from the point of view of representations.

The functions $f_i(P;\mathbf x)$ constructed in the \S \ref{sec:construct}, however, are zero on $V_n\smallsetminus \mathcal D(\C)$, and
non-zero on $\mathcal D(\C)$, by  Corollary \ref{cor:f}. Thus Lemma \ref{lem:Gvanish} suggests relating the classical viewpoint on $f_i$ used in their construction to the
language of representations.
Without loss of generality we focus on $f_3(P;\mathbf x)$ and think of $f_3(P;\mathbf x)$ as providing a set of functions $f_3(\, \cdot\, ;\mathbf x): P \mapsto f_3(P;\mathbf{x})$ parametrized by the auxiliary variables $\mathbf x$.

Notationally, we use non-negative integer vectors $\alpha=(\alpha_1,\alpha_2,\ldots ,\alpha_n)$  to express a  monomial  $x^\alpha:=x_1^{\alpha_1}x_2^{\alpha_2}\ldots x_n^{\alpha_n}$ of total degree $d=\sum_{1\leq i\leq n}\alpha_i$.
Then
\[f_{3}(P;\mathbf x)=\sum_{\alpha}p_{\alpha}(P)x^{\alpha},\]
with coefficients $p_{\alpha}\in \mathbb{C}[U]_{n^2}$, where the monomials $x^\alpha$ are interpreted as basis elements for $\mathbb{C}[V]_d\cong V^{(d)}$, with $d=n(n-2)$.

One can see directly that no $p_\alpha$ is identically zero: To start, note that equation \eqref{eq:fiD} gives
\begin{align}
\sum_\alpha p_{\alpha}(D(I,I,g_3)) x^\alpha&=f_3(D(I,I,g_3);\mathbf x)\\
&=\det(g_3)^2f_3(D;g_3\mathbf x)\nonumber\\
&=\det(g_3)^2(n-1)((g_3 \mathbf x)_1(g_3 \mathbf x)_2\ldots (g_3 \mathbf x)_n)^{n-2}.\nonumber
\end{align}
Choosing $g_3\in GL(n,\mathbb{C})$ with strictly positive entries, every possible monomial $x^\alpha$ appears in the expansion of $((g_3 \mathbf x)_1(g_3 \mathbf x)_2\ldots (g_3 \mathbf x)_n)^{n-2}$. Hence $p_{\alpha}(D(I,I,g_3))\neq 0$ for all $\alpha$.

To understand the transformation of $p_\alpha$ under  $g=(g_1,g_2,g_3)\in G(\C)$, observe that $g_3$ maps the monomial $x^\alpha$ of total degree $d$ by 
\begin{align}
x^\alpha\mapsto  &\left(\sum_{i_1=1}^ng_3(1,i_1)x_{i_1}\right)^{\alpha_1}\left(\sum_{i_2=1}^ng_3(2,i_2)x_{i_2}\right)^{\alpha_2}\ldots \left(\sum_{i_n=1}^ng_3(2,i_n)x_{i_n}\right)^{\alpha_n}\nonumber\\
&:=\sum_{\beta}\tilde{g}_3(\alpha,\beta)x^{\beta}.\nonumber
\end{align}
The matrix elements $\tilde{g}_3(\alpha,\beta)$ provide precisely the irreducible representation $\rho_\lambda:GL(n,\mathbb{C})\rightarrow GL(n_\lambda,\mathbb{C})$ with $\lambda=(d)$ and $\tilde{g}_3=\rho_{(d)}(g_3)$.
The polynomials $p_{\alpha}(P)$ therefore transform under $G(\C)$ as
\begin{equation}
p_{\alpha}\mapsto \det(g_1)^n\det(g_2)^n\det(g_3)^2\sum_{\beta}p_\beta \tilde{g}_3(\beta,\alpha).\label{eq:palpha}
\end{equation}
This formula also implies that the $p_\alpha$ are independent: If $\mathbf c=(c_\alpha)$ specifies a dependency relation $\sum c_\alpha p_\alpha=0$,
then from \eqref{eq:palpha} it follows that $\mathbf d= \tilde{g}_3\mathbf c$ gives another dependency relation for every choice of $g_3$. By the irreducibility of $\rho_{(d)}$, and varying $g_3$, this can happen only if all $p_\alpha$ vanish identically, which they do not.

\smallskip

Now let $W$ be the span of  $\{p_\alpha\}$. Since the $p_\alpha$ are independent,  \eqref{eq:palpha} defines a linear map on $W$, making $W$ an irreducible $G(\C)$-module. 
The character of the corresponding representation of $G(\mathbb{C})$ is the product $\{n^n\}\times \{n^n\}\times \left(\{2^n\}\otimes \{d\}\right)$, with  $d\!=\!n(n-2)$.
Application of the Littlewood-Richardson rule \cite{macdonald1979} shows that, as a character of $GL(n,\mathbb{C})$ where partitions of depth greater than $n$ are excluded, the third factor is $\{2^n\}\otimes \{d\}=\{2+n(n-2),2^{n-1}\}$.

Thus the polynomial $f_3(P;\mathbf x)$ is associated to a module $W$ of polynomials in the entries of $P$ alone that transforms as $\{n^n\}\times \{n^n\}\times \{2+n(n-2),2^{n-1}\}$.
The dimension of such  an irreducible module is calculated by the hook length formula as
\begin{align*}
\dim({(n^n)})\times\dim((n^n))\times\dim({(2+n(n-2),2^{n-1})})&=1\times 1\times \binom{n(n-2)+(n-1)}{n(n-2)}\\
&=\binom{n^2-n-1}{n-1}.
\end{align*}
This result is not a surprise, since it is the dimension of the space of homogeneous polynomials in $n$ variables of degree $n(n-2)$,\emph{ i.e.}, the cardinality of the basis $\{p_\alpha\}$ of $W$.

\medskip

The multiplicity of modules transforming as  $\{n^n\}\times \{n^n\}\times \{2+n(n-2),2^{n-1}\}$ in the decomposition of $\mathbb{C}\left[U\right]_{n^2}$ can be calculated, at least for a few small values of $n$, using \texttt{Schur}, and are given in Table~\ref{tb:decomp}. 
Note that in the notation for the multiplicity $$\gamma^{(n^2)}_{(n^n)(n^n)(2+n(n-2),2^{(n-1)}),}
$$ whose values are given in the table,
 the superscript ${(n^2)}$ should be read as a 1-part partition of the integer $n^2$, while the subscripts $(n^n)$ denote the $n$-part partition $(n,n,\ldots,n)$ of $n^2$ in the standard shorthand notation for partitions.

\begin{table}[h]
\begin{center}
\begin{tabular}{l|lll}
$n$ & Module & Multiplicity & Dimension \\
 \hline
$2$ &  $\{2^2\}\times \{2^2\}\times \{2^2\}$ & 1 & 1\\
$3$ &  $\{3^3\}\times \{3^3\}\times \{5,2^2\}$ & 2 & 10 \\
$4$ &  $\{4^4\}\times \{4^4\}\times \{10,2^3\}$ & 5 & 165 \\
$5$ &  $\{5^5\}\times \{5^5\}\times \{17,2^4\}$ & 10 & 3876 \nonumber
\end{tabular}
\end{center}
\caption{Irreducible modules associated with the functions $f_3$ on $n\times n\times n$ tensors, along with their multiplicities $
\gamma^{(n^2)}_{(n^n)(n^n)(2+n(n-2),2^{(n-1)})}$ and dimensions $\binom{n^2-n-1}{n-1}$, 
 in the space of homogeneous polynomials of degree $n^2$ in the entries of $P$.}\label{tb:decomp}
\end{table}

For $n=3$, the multiplicity of 2 shown in Table \ref{tb:decomp} allows not only for the existence of $f_i$, but also for an additional function transforming in the same way.
Indeed, from the transformation formula \eqref{eq:htrans} for $h_i$ and the fact that $\tau_3$ is an invariant of weight $(2,2,2)$,  
the function $G_i(P;\mathbf x)=h_i(P;\mathbf x)\tau_3(P)$ will be such a function. This is precisely the construction given in
Proposition \ref{prop:tangles}, and since $\tau_3(D)\ne 0$, that result applies. By the discussion preceding that proposition, $f_i/h_i$ is not independent of $\mathbf x$, and thus cannot be a multiple of $\tau_3$. Thus $f_i$ and $G_i$ are independent. 

Similarly, for $n=4$, $G_i=h_i^2\tau_4$ and $f_i$ transform in the same way, and can be seen to be independent. While Table \ref{tb:decomp} indicates the existence of 3 other independent functions with the same transformation property, we have no explicit understanding of them, and do not know what, if anything, they indicate about tensor rank.

\section{Application to real tensors}\label{sec:real}

In this section we investigate real tensors in $\mathcal D(\C)$. 
As Corollary \ref{cor:f} gives a semialgebraic description of $\mathcal D(\C)$, it is natural to seek a similar description of  $\mathcal D(\R)$.
To obtain conditions that define $\mathcal D(\R)$ we should of course include the additional condition that  a tensor $P$ be real. However, even in the case that $n=2$ this is not sufficient to define $\mathcal D(\R)$; one also needs that $\Delta(P)>0$ \cite{dSL}.

Using our functions $f_i$ as a tool, our main results are as follows.
For $n=3$ the sign of an invariant function can be used to distinguish $\mathcal D(\R)$, extending the $n=2$ result in this single case. For all $n\ge 2$, the zero set of our $f_i$ partitions the real points in $\mathcal D(\C)$ into connected components. Within a component, all tensors have the same number of complex conjugate pairs of  rank-1 tensors in their rank decompositions. Moreover, with one exception, there is a single component for each allowable number of pairs. In particular one component
of the real points is $\mathcal D(\R)$.

\medskip

Let $V_n(\R)=V_n\cap \R^{n^3}$ denote the real points on $V_n$.
\smallskip

\begin{lemma}\label{lem:real}
Suppose $P\in\mathcal D(\C)\cap V_n(\R)$. Then, up to simultaneous permutation of the rows of the $g_i$, $P$ can be uniquely expressed as $P=D(g_1,g_2,g_3)$, subject to the following conditions: 
\begin{romannum}
\item The first non-zero entry of every row of $g_1$ and $g_2$ is 1,
\item For some  $k\le n/2$,  the first $2k$ rows of every $g_i$ are complex (and neither real nor purely imaginary), in conjugate pairs, and the remaining rows are real.
\end{romannum}
Thus $P$ has a unique decomposition into complex rank-1 components with $2k$ complex components, in conjugate pairs, and $n-2k$ real components.
\end{lemma}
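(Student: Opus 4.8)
The plan is to start from the existence of \emph{some} decomposition $P = D(h_1,h_2,h_3)$ with $h_i \in GL(n,\C)$, which is guaranteed since $P \in \mathcal D(\C)$, and then to normalize it in two stages: first to achieve the stated form (i)--(ii), and second to prove uniqueness up to simultaneous row permutation. Writing $P = \sum_{i=1}^n \mathbf u_i \otimes \mathbf v_i \otimes \mathbf w_i$ with $\mathbf u_i,\mathbf v_i,\mathbf w_i$ the $i$th rows of $h_1,h_2,h_3$, the key structural input is that since $P$ has multilinear rank $(n,n,n)$, each of the three families $\{\mathbf u_i\}$, $\{\mathbf v_i\}$, $\{\mathbf w_i\}$ is a basis of $\C^n$; this is exactly the content of Proposition in \S\ref{sec:defs}. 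So the only freedom in the decomposition is to replace $(\mathbf u_i,\mathbf v_i,\mathbf w_i)$ by $(c_i\mathbf u_i, d_i\mathbf v_i, e_i\mathbf w_i)$ with $c_id_ie_i = 1$, together with a permutation of the index set $\{1,\dots,n\}$ --- this is the standard uniqueness-of-tensor-decomposition statement for tensors of rank equal to multilinear rank, which I would prove by a short argument: any rank-$n$ decomposition gives a basis in each factor, and comparing two such decompositions via the induced change-of-basis matrices forces them to be monomial (a diagonal times a permutation), since $D$ is sent to $D$ only by such triples.

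Next I would use the reality of $P$. Applying complex conjugation to $P = \sum \mathbf u_i\otimes\mathbf v_i\otimes\mathbf w_i$ and using $\bar P = P$ shows that $\{\bar{\mathbf u}_i\otimes\bar{\mathbf v}_i\otimes\bar{\mathbf w}_i\}$ is another rank-$n$ decomposition of $P$; by the uniqueness just established, complex conjugation permutes the rank-one summands. Thus there is an involution $\pi$ of $\{1,\dots,n\}$ and scalars with $\bar{\mathbf u}_i = c_i \mathbf u_{\pi(i)}$, $\bar{\mathbf v}_i = d_i\mathbf v_{\pi(i)}$, $\bar{\mathbf w}_i = e_i\mathbf w_{\pi(i)}$, $c_id_ie_i=1$. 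The fixed points of $\pi$ will give the rows that can be made real, and the $2$-cycles give the conjugate pairs; let $2k$ be the number of elements in $2$-cycles, so $k \le n/2$. For a fixed point $i$, $\bar{\mathbf u}_i = c_i\mathbf u_i$ with $|c_i|=1$ means $\mathbf u_i$ is a unit complex scalar times a real vector, so after rescaling by that scalar we may take $\mathbf u_i$ real with first nonzero entry $1$; the same for $\mathbf v_i$, and then $\mathbf w_i$ is forced (it is real too since the product of the three scalars is $1$ and the first two are real). For a $2$-cycle $\{i,\pi(i)\}$, after normalizing $\mathbf u_i$ and $\mathbf v_i$ to have first nonzero entry $1$ I would set $\mathbf u_{\pi(i)} = \bar{\mathbf u}_i$ and $\mathbf v_{\pi(i)} = \bar{\mathbf v}_i$ (which then also have first nonzero entry $1$), with $\mathbf w_{\pi(i)}$ and $\mathbf w_i$ determined by the scalar constraint; one checks $\mathbf u_i$ cannot be real or purely imaginary, else $i$ would in fact be a fixed point, contradicting it lying in a $2$-cycle of $\pi$. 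Finally reorder the index set so that the $2k$ paired rows come first in conjugate-pair order, then the $n-2k$ real rows: this gives a decomposition of the asserted form, establishing existence.

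For uniqueness of this normalized form: suppose $P = D(g_1,g_2,g_3) = D(g_1',g_2',g_3')$ both satisfy (i)--(ii). By the uniqueness of rank-$n$ decompositions, the rows of $g_i'$ are obtained from those of $g_i$ by a common permutation $\sigma$ and scalings $c_i,d_i,e_i$ with $c_id_ie_i=1$. Condition (i), that the first nonzero entry of each row of $g_1,g_1',g_2,g_2'$ is $1$, forces $c_i = d_i = 1$ for all $i$ (a scalar multiple of a vector normalized this way is again so normalized only if the scalar is $1$), hence $e_i=1$ as well, so the two decompositions differ only by the permutation $\sigma$. Condition (ii) pins down $\sigma$ up to the residual ambiguity of permuting within conjugate pairs and permuting the pairs among themselves and permuting the real rows among themselves --- which is exactly ``simultaneous permutation of the rows of the $g_i$'' as allowed in the statement, and also shows $k$ is an invariant of $P$ (it equals half the number of non-fixed points of the conjugation involution $\pi$, which does not depend on the chosen decomposition). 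The last sentence of the lemma --- that $P$ has a unique decomposition into $2k$ conjugate-paired complex rank-one tensors and $n-2k$ real ones --- is then immediate, since the individual rank-one summands $\mathbf u_i\otimes\mathbf v_i\otimes\mathbf w_i$ are unaffected by the allowed simultaneous row permutations and by the (forced trivial) rescalings.

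\textbf{Main obstacle.} The genuinely load-bearing step is the uniqueness of the rank-$n$ decomposition of a tensor of multilinear rank $(n,n,n)$ up to monomial transformations; everything else is bookkeeping with the conjugation involution and the normalization of first nonzero entries. I expect the cleanest route to that uniqueness is the observation that two such decompositions correspond to two triples $(h_1,h_2,h_3)$, $(h_1',h_2',h_3')$ in $GL(n,\C)$ with $D(h_1,h_2,h_3) = D(h_1',h_2',h_3')$, equivalently $D = D(h_1'h_1^{-1}, h_2'h_2^{-1}, h_3'h_3^{-1})$, so it reduces to: the stabilizer of $D$ in $G(\C)$ consists exactly of triples of monomial matrices $(\diag(\mathbf a)\Pi, \diag(\mathbf b)\Pi, \diag(\mathbf c)\Pi)$ with a common permutation $\Pi$ and $a_ib_ic_i = 1$. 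This stabilizer computation is elementary (examine how $D = \sum \mathbf e_i^{\otimes 3}$ transforms and match rank-one pieces) and is likely already folklore in this setting, so it can be done quickly or cited.
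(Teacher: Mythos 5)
Your proposal is correct, and its skeleton matches the paper's proof: obtain one expression $P=D(g_1,g_2,g_3)$, invoke uniqueness of the length-$n$ rank-one decomposition up to permutation and compensating scalings, observe that complex conjugation permutes the summands since $P$ is real, and then normalize first nonzero entries to kill the scaling freedom and sort the summands into conjugate pairs and real terms. The one genuine difference is the justification of the uniqueness step: the paper simply cites Kruskal's theorem (the rows of each $g_j$ are independent, so Kruskal applies), whereas you reduce to computing the stabilizer of $D$ in $G(\C)$ and claim it consists of triples of monomial matrices with a common permutation and product-one scalings. That reduction is valid (any rank-$n$ decomposition of a tensor of multilinear rank $(n,n,n)$ has bases in each factor, so two decompositions differ by an element of the stabilizer), and the stabilizer computation is indeed elementary --- e.g.\ contract $D(h_1,h_2,h_3)=D$ against a dual basis vector of the rows of $h_3$: the left side is a rank-one matrix while $D*_3\mathbf v=\diag(\mathbf v)$, forcing each dual vector to be a scaled standard basis vector, hence $h_3$ (and likewise $h_1,h_2$) monomial with matching permutations. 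Your route buys self-containedness at the cost of a short extra argument; Kruskal buys brevity and a stronger statement (uniqueness among decompositions of length at most $n$), which is not needed here since the conjugated decomposition also has exactly $n$ terms. One small slip to repair: in the $2$-cycle case you assert that $\mathbf u_i$ real (or purely imaginary) would make $i$ a fixed point of $\pi$; that does not follow, since a single real factor does not make the whole summand real. The correct reason, as in the paper, is linear independence: if $\mathbf u_i$ were real then $\mathbf u_{\pi(i)}=\overline{\mathbf u}_i=\mathbf u_i$ would be a repeated row of $g_1$, contradicting $g_1\in GL(n,\C)$, and a purely imaginary row of $g_3$ would similarly give two proportional rows. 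This is a one-line fix, not a gap, but you should also state it for the rows of $g_3$ (which are not normalized), since condition (ii) concerns all three $g_i$.
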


\begin{proof}
For some $g_1,g_2,g_3\in GL(n,\C)$, $P=D(g_1,g_2,g_3)$, which implies
\begin{equation}
P=\sum_{i=1}^n \mathbf g_1^{i} \otimes \mathbf g_2^{i}\otimes \mathbf g_3^i, \label{eq:tendec}
\end{equation}
 where $\mathbf g_j^i$ is the $i$th row of $g_j$.
Since the rows of each $g_j$ are independent, Kruskal's Theorem \cite{Kruskal,Kruskal2,RhodesKrusk} implies this rank-1 decomposition is unique, up to ordering of the summands.
However the individual vectors  $\mathbf g_j^i$ can be multiplied by scalars $a_j^i$, as long
as $a_1^ia_2^ia_3^i=1$. Requiring the first non-zero entries in each row of $g_1, g_2$ to be 1 removes that freedom. 

Since $P$ is real, the complex conjugate of the decomposition in equation \eqref{eq:tendec} must give the same decomposition, up to order. Thus for each $i$
either $ \mathbf g_1^{i} \otimes \mathbf g_2^{i}\otimes \mathbf g_3^i$ is real, or its complex conjugate also appears as a summand. 

We may thus simultaneously permute the rows of the $g_j$ so for $i=1,\dots, k$ 
$$\mathbf g_1^{2i-1} \otimes \mathbf g_2^{2i-1}\otimes \mathbf g_3^{2i-1}= \overline{\mathbf  g}_1^{2i} \otimes \overline{\mathbf g}_2^{2i}\otimes \overline{\mathbf g}_3^{2i},$$
and for $2k<i\le n$ the summand is real.

Having done this, since each $\mathbf g_1^i, \mathbf g_2^i$ has an entry of 1, from the conjugate summands we first see that $\mathbf g_3^{2i-1}= \overline{\mathbf g}_3^{2i}$ for $i=1,\dots,k$ and then that similar statements hold for the rows of $g_1$ and $g_2$. Thus all three $g_i$ have the first $2k$ rows in conjugate pairs. Moreover, none of these first $2k$ rows of any $g_i$ is real, lest there be a repeated row, contradicting that $g_i\in GL(n,\C)$. Likewise, these rows are not purely imaginary. 

That the remaining rows of the $g_i$ are real follows by an analogous argument.
\end{proof}

\medskip

For $P\in \mathcal D(\C)\cap V_n(\R)$, we
refer to the ordered pair $(n-2k,k)$ of this lemma as the \emph{signature} of $P$. For $P\notin D(\C)\cap V_n(\R)$, the rank-1 tensor decomposition may not be unique; so we leave the signature undefined. The orbit $\mathcal D(\R)$ thus comprises those  $P\in \mathcal D(\C)$ with signature $(n,0)$.

Information on the signature of a tensor can be obtained from the value of the function $r_i$ defined in section \ref{sec:construct}, as we now show.
\medskip

\begin{theorem}\label{thm:fifact}
For all $n$, the set of  tensors $P\in V_n(\R)$  for which $r_i(P)>0$ is precisely those tensors of signature $(n-2k,k)$ with $k$ even.

In particular,
for $n=2$,  $\mathcal D(\R)$ is precisely the set of  tensors $P\in V_2(\R)$   with $\Delta(P)>0$, and for  $n=3$, $\mathcal D(\R)$ is precisely the set of  tensors $P\in V_3(\R)$   with $\tau_3(P)>0$.

For any $n\ge 3$, $\mathcal D(\R)$ is precisely the set of  tensors $P\in V_n(\R)$  with $f_i(P; \mathbf x)\ne 0$ such that when $f_i(P; \mathbf x)$ is factored into linear forms as 
$$f_i(P; \mathbf x)=c\prod_{j=1}^nl_j(\mathbf x)^{n-2},$$
then all of the linear forms $l_j$ may be taken to be real.
\end{theorem}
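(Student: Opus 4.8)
The plan is to compute $r_i$ (equivalently, $f_i$) on the semi-canonical decomposition of Lemma~\ref{lem:real} and read off the sign or factorization directly. First I would use the transformation formula \eqref{eq:rtrans}: writing $P = D(g_1,g_2,g_3)$ as in Lemma~\ref{lem:real}, we get $r_i(P;\mathbf x) = \det(g_1)^2\det(g_2)^2\det(g_3)^2\, r_i(D;g_i\mathbf x) = \det(g_1)^2\det(g_2)^2\det(g_3)^2$, using \eqref{eq:rone}. Since the first $2k$ rows of each $g_j$ come in conjugate pairs and the rest are real, $\det(g_j)$ is (a real multiple of $i^k$ coming from each conjugate pair contributing a purely imaginary or sign-definite factor) — more carefully, conjugating the matrix and permuting the $k$ pairs of rows shows $\overline{\det(g_j)} = (-1)^k\det(g_j)$, so $\det(g_j)^2$ is real with sign $(-1)^k$. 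Hence $r_i(P) = \bigl(\det(g_1)\det(g_2)\det(g_3)\bigr)^2$ has sign $(-1)^{3k} = (-1)^k$, which is positive exactly when $k$ is even. This proves the first assertion, and the $n=2,3$ statements follow because for those $n$ a tensor of signature $(n-2k,k)$ with $k$ odd forces $k=1$, i.e. $n-2k \in \{0,1\}$, so $r_i > 0$ is equivalent to signature $(n,0)$, i.e. to $\mathcal D(\R)$; and $r_i$ agrees (up to the positive factor $(n-1)h_i^{n-2}$, which for $n=2,3$ and $P$ real... — here I must be a bit careful) with $\Delta$ resp. $\tau_3$ by Corollary~\ref{cor:r} and the weight-$(2,2,2)$ uniqueness from \S\ref{sec:reps}.

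For the final factorization statement the key computation is again via \eqref{eq:ftrans}: $f_i(P;\mathbf x) = \det(g_j)^n\det(g_k)^n\det(g_i)^2\, f_i(D;g_i\mathbf x)$, and by \eqref{eq:fiD}, $f_i(D;\mathbf y) = (n-1)(y_1y_2\cdots y_n)^{n-2}$. Substituting $\mathbf y = g_i\mathbf x$ gives
\begin{equation*}
f_i(P;\mathbf x) = (n-1)\det(g_j)^n\det(g_k)^n\det(g_i)^2\prod_{m=1}^n \ell_m(\mathbf x)^{n-2},
\end{equation*}
where $\ell_m(\mathbf x) = (g_i\mathbf x)_m$ is the linear form given by the $m$th row of $g_i$. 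So the linear forms appearing in the factorization of $f_i(P;\mathbf x)$ are, up to scalars, exactly the rows of $g_i$. By Lemma~\ref{lem:real}, for $P \in \mathcal D(\C)\cap V_n(\R)$ of signature $(n-2k,k)$, the rows of $g_i$ are real if and only if $k=0$. I would then argue that the factorization of $f_i(P;\mathbf x)$ into linear forms is essentially unique (a polynomial that is a power of a product of distinct linear forms, when the $\ell_m$ are pairwise non-proportional — which holds since the rows of $g_i \in GL(n,\C)$ are independent) so the property ``all $l_j$ may be taken real'' is well-defined and holds iff $k = 0$, i.e. iff $P \in \mathcal D(\R)$. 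Conversely any $P \in \mathcal D(\R)$ has $g_i$ real (from the decomposition $P = D(g_1,g_2,g_3)$ with all $g_j$ real), so $f_i$ factors into real linear forms.

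The main obstacle I anticipate is twofold and both parts are bookkeeping rather than deep. First, one must check that when $k = 0$ but the linear forms $\ell_m$ happen to come from a complex $g_i$ (a priori $P\in\mathcal D(\R)$ could be written $P = D(g_1,g_2,g_3)$ with non-real $g_i$ — uniqueness in Lemma~\ref{lem:real} is only up to the scalings and permutation, and a real $P$ of signature $(n,0)$ has, by Kruskal uniqueness, rank-$1$ components each of which is real, forcing each $\mathbf g_i^m$ to be real up to a single complex scalar whose cube-product is $1$; normalizing as in Lemma~\ref{lem:real} makes them genuinely real). So I would invoke the uniqueness part of Lemma~\ref{lem:real} carefully to pin down that $k=0$ really does force a real factorization and $k\ge 1$ obstructs it — the potential trap is a tensor whose $f_i$ accidentally factors into real forms despite $k\ge1$, which the non-proportionality of the $\ell_m$ (independence of rows of $g_i$) combined with the signature being a $G(\R)$-invariant rules out. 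Second, for the $n=3$ clause I need that $r_i$, which is a priori only a rational function, actually coincides with the polynomial $\tau_3$ on $V_3(\R)$ up to a positive constant: this follows from \eqref{eq:reqF} with $k=1$, $F=\tau_3/\tau_3(D) = \tau_3/6$, giving $r_i(P) = \tau_3(P)/6$ on $\mathcal D(\C)$ and hence on $V_3$ by continuity, so $\mathrm{sgn}(r_i) = \mathrm{sgn}(\tau_3)$ there; the analogous $n=2$ statement is the classical fact $r_i = \Delta/\Delta(D)$ with $\Delta(D)$ computed directly from \eqref{eq:hyper} to be positive.
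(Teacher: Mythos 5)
Your proposal is correct and follows essentially the same route as the paper's proof: the sign computation via \eqref{eq:rtrans}, \eqref{eq:rone} and the conjugate-pair row structure of Lemma \ref{lem:real} giving $\det(g_j)^2$ of sign $(-1)^k$; the reduction of the $n=2,3$ cases to $\Delta$ and $\tau_3$ via \eqref{eq:reqF} with $\tau_3(D)>0$; and the factorization criterion via \eqref{eq:ftrans}, \eqref{eq:fiD} together with unique factorization in $\C[\mathbf x]$. The extra care you take with the uniqueness in Lemma \ref{lem:real} is exactly how the paper rules out an accidental real factorization when $k\ge 1$, so no substantive difference remains.
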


In this statement the conditions $P\in V_n(\R)$  with $f_i(P; \mathbf x)\ne 0$ can be replaced, by Theorem \ref{thm:fzero} and Corollary \ref{cor:f}, by an explicit collection of polynomial equalities and
$f_i(P; \mathbf x)\ne 0$.

\begin{proof} If $P$ has signature $(n-2k,k)$, then $P=D(g_1,g_2,g_3)$ where each $g_i$ has exactly $k$ pairs of complex 
conjugate rows.  Thus $\overline g_i= \sigma g_i$ for some permutation $\sigma$ with $\det \sigma =(-1)^k$. This implies
$\overline {\det g_i}=(-1)^k\det g_i$, so $\det g_i$ is real or purely imaginary according to whether $k$ is even or odd. Thus
$(\det g_i)^2$ is positive or negative according to whether $k$ is even or odd.
Using the properties of $r_i$ given in formulae \eqref{eq:rtrans}  and \eqref{eq:rone} the first claim is established. 

Note that for $n=2,3$, the only allowable even value for $k$ is zero. The claim for $n=2$ is then immediate since $r_i=\Delta$ in that case. For $n=3$ the claim follows similarly, from  the fact that
$\tau_3$ is of weight $(2,2,2)$ and $\tau_3(D)>0$, so, by equation \eqref{eq:reqF}, $\tau_3$ is a positive multiple of $r_i$ when restricted to $\mathcal D(\C)$.

\smallskip

For arbitrary $n\ge 3$,  $P\in V_n$ and $f_i(P; \mathbf x)\ne 0$ is equivalent to $P\in \mathcal D(\C)$ by Corollary \ref{cor:f}.
By equations \eqref{eq:ftrans} and \eqref{eq:fiD},
we have that for $P=D(g_1,g_2,g_3)$,
$$f_i(P; \mathbf x)=c'  f_i(D,g_i\mathbf x)=c\prod_{j=1}^n l_j(\mathbf x)^{n-2},$$
for some scalars $c',c$, and linear forms $l_j$ defined by the rows of $g_i$.  If $P\in \mathcal D(\R)$, so the $g_i$ may be taken to be real, $f_i(P; \mathbf x)$ has a factorization
using real linear forms. For $n\ge 3$ and $P\notin\mathcal D(\R)$,  by Lemma \ref{lem:real} such a factorization exists with at least two $l_j$ complex and not rescalable to be real. By unique factorization in the ring $\C[\mathbf x]$, there can be no factorization into powers of real linear forms in this case.
\end{proof}

Note that the statement in this theorem about the factorization into linear forms of $f_i(P;\mathbf x)$ could be replaced with a similar one about $h_i(P;\mathbf x)$.

\medskip

We next consider the connected components obtained from $V_n(\R)$ by removing the zero set of an $f_i$.
Let $Z_n=\{P\in V_n(\R) ~|~ f_i(P; \mathbf x)=0\}$, and note that  $Z_n$ is independent of the choice of $i\in\{1,2,3\}$, by Corollary
\ref{cor:f}.
 
 \medskip
 
\begin{theorem}\label{thm:path}
On each path component of $V_n(\R)\smallsetminus Z_n$ the 
 signature is constant.
 For each  $0\le k<{n/2}$, there is 1 path component of signature $(n-2k,k)$. 
 When $n$ is even there are 4 components with signature $(0,n/2)$.
\end{theorem}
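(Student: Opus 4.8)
The plan is to understand the path components of $V_n(\R) \smallsetminus Z_n$ via the description of points in $\mathcal D(\C) \cap V_n(\R)$ given by Lemma \ref{lem:real}, and to track exactly how much freedom remains in the decomposition data $(g_1,g_2,g_3)$ within a fixed signature. First I would note that $V_n(\R)\smallsetminus Z_n = \mathcal D(\C)\cap V_n(\R)$ by Corollary \ref{cor:f}, and that signature is constant on path components: the signature $(n-2k,k)$ is detected by whether $(\det g_i)^2$ is positive or negative (Theorem \ref{thm:fifact} gives the parity of $k$), but more is needed, so I would instead argue that the number of conjugate pairs cannot jump along a continuous path without some row of some $g_i$ becoming real or purely imaginary, which by Lemma \ref{lem:real} would force a collision of rows and hence $P \notin \mathcal D(\C)$ — i.e., the path would have to cross $Z_n$. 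This establishes the first sentence.

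Next, to count components within a fixed signature $(n-2k,k)$, I would parametrize: a point $P$ of signature $(n-2k,k)$ is determined, by the uniqueness clause of Lemma \ref{lem:real}, by an unordered choice of $k$ complex conjugate pairs of rows (with normalized leading entry $1$) together with $n-2k$ real rows, for each of the three matrices $g_1, g_2, g_3$, subject to each $g_i$ being invertible and to the scaling constraint $a_1^i a_2^i a_3^i = 1$ on the third matrix's rows. The key point is that the space of such invertible real-plus-conjugate-pair configurations is path-connected when $k < n/2$: one can continuously deform any configuration of rows to a standard one, moving complex pairs within the upper half plane and real rows within $\R^n$, while maintaining linear independence, because the "bad" locus (rows becoming dependent, or a complex row degenerating to real/imaginary) has real codimension $\ge 1$ in each relevant factor and — crucially — when there is at least one real slot available ($k < n/2$, so $n - 2k \ge 1$) there is enough room to route around these obstructions. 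I would make this precise by an explicit isotopy: order the pairs by, say, the argument of their leading non-unit coordinate and slide them apart, then slide the real rows to standard basis vectors (after Gram–Schmidt-type deformation), handling the determinant-sign and scaling constraints by reserving one real row whose sign can be flipped and one overall scaling to absorb $a_1^i a_2^i a_3^i = 1$.

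For the exceptional case $n$ even, $k = n/2$: now there are no real rows, so each $g_i$ consists purely of $n/2$ conjugate pairs, and $\det g_i$ is nonzero purely imaginary (as $(\det g_i)^2 < 0$). The obstruction to connectedness is that there is no longer a "free" real row to flip signs, so the sign of $\det g_1$, of $\det g_2$, and of $\det g_3$ — each a well-defined element of $\{\pm i\}$ after the leading-entry normalization — become discrete invariants, except that the product is constrained. I would show the space of admissible triples of "pure conjugate pair" matrices has exactly the components indexed by $(\mathrm{sgn}(\det g_1/i), \mathrm{sgn}(\det g_2/i), \mathrm{sgn}(\det g_3/i)) \in \{\pm 1\}^3$ modulo the single relation coming from $a_1^i a_2^i a_3^i = 1$ (which, tracing through how rescaling a conjugate pair changes $\det$, ties the product of the three signs to $+1$), giving $2^3/2 = 4$ components. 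The main obstacle I anticipate is the connectedness argument in the generic case — rigorously exhibiting the isotopy that simultaneously keeps all three $g_i$ invertible, keeps each complex row genuinely complex, and respects the normalization and the multiplicative scaling constraint — since one must verify the obstruction loci are avoidable, which is where the hypothesis $k < n/2$ (equivalently, the existence of a real row) does the real work; the $n$ even, $k=n/2$ count is then the clean "what goes wrong" complement of that argument.
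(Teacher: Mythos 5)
The central gap is in your component count for a fixed signature. The space of structured triples $(g_1,g_2,g_3)$ --- each $g_i\in GL(n,\C)$ invertible, with $k$ conjugate pairs of rows in fixed positions and $n-2k$ real rows --- is \emph{not} path-connected for $k<n/2$: writing $g_i=J_kh_i$, where $J_k$ is block-diagonal with $k$ blocks $\left(\begin{smallmatrix}1&i\\1&-i\end{smallmatrix}\right)$ and $n-2k$ ones, this configuration space is homeomorphic to $GL(n,\R)^3$, which has $8$ path components. Your heuristic that the bad loci have ``real codimension $\ge 1$'' and can be routed around is exactly where this breaks: the determinant-zero locus has codimension $1$ and genuinely disconnects (this is why $GL(n,\R)$ itself is disconnected), so no isotopy of rows of the kind you describe can change the signs of the determinants. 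Connectedness can only hold for the \emph{image} in tensor space, and proving it requires exploiting the non-injectivity of $(g_1,g_2,g_3)\mapsto D(g_1,g_2,g_3)$: one must produce enough fiber symmetries to identify the images of all $8$ components. Your ``one flippable real row'' yields only rescalings of a real summand by reals $a_1,a_2,a_3$ with $a_1a_2a_3=1$, i.e.\ sign flips $(s_1,s_2,s_3)$ with $s_1s_2s_3=+1$; that cuts $8$ classes to $2$, not $1$. The missing ingredient is a symmetry whose sign pattern has product $-1$ --- e.g.\ the simultaneous swap of the two rows of a conjugate pair in all three factors, a simultaneous transposition of two real rows, or the stabilizer-type identities involving $K$, $\sigma_2$, $R_2$ and $L$ that do this work in the paper's proof. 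Relatedly, in the $k=n/2$ case your relation ``product of the three signs $=+1$ via $a_1^ia_2^ia_3^i=1$'' is wrong: a structure-preserving rescaling of a conjugate pair multiplies $\det g_i$ by $|a_i|^2>0$ and changes no signs (also, $\det g_i$ is purely imaginary only when $n/2$ is odd; it is real when $n/2$ is even). The true identification is the within-pair swap, giving $(\epsilon_1,\epsilon_2,\epsilon_3)\sim(-\epsilon_1,-\epsilon_2,-\epsilon_3)$, hence $4$ classes, and the four sign patterns realized are not all of product $+1$. Finally, to conclude that these $4$ classes lie in $4$ distinct \emph{path components of the tensor set} you need a continuous invariant of $P$, not of a chosen matrix triple; the paper obtains one from the observation that for signature $(0,n/2)$ each $h_i(P;\mathbf x)$ is a nonzero polynomial that is either everywhere nonnegative or everywhere nonpositive on $\R^n$, and this cannot switch along a path without $h_i(P;\cdot)$ vanishing identically. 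You give no such argument.

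A secondary gap: your proof that the signature is constant on components assumes that the rank-one decomposition, and hence the conjugate pairing of rows, varies continuously along a path, so that a change in $k$ forces a complex row to become real. That continuity is precisely what must be established. The paper sidesteps it with a compactness argument: factor $f_i(P_\ell;\mathbf x)$ into linear forms with unit-normalized coefficient vectors, pass to a convergent subsequence, and note that if the number of real forms increased in the limit, a conjugate pair of forms would converge to a common real form, contradicting the linear independence of the rows of $g_i$ at the limit tensor. Your punchline (a degenerating pair forces a row collision) is the same, but the limiting argument that makes it legitimate is absent from your sketch and needs to be supplied.
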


\smallskip

\begin{proof} 
Suppose a component of $V_n(\R)\smallsetminus Z_n$ contains tensors of signature $(n-2k,k)$ for two different values of $k$. Let $k(P)$ denote the second term in the signature $(n-2k,k)$ of a tensor $P$.
Then in this component we can choose a tensor $P_0$ and sequence of  tensors $P_1,P_2, P_3,\dots$ with 
$\lim_{\ell\to\infty}P_\ell=P_0$, $k(P_0)=k_0\ne k_1=k(P_\ell)$ for $\ell\ge 1$.
But then 
\begin{equation}\label{eq:lim1}
\lim_{\ell\to \infty} f_i(P_\ell; \mathbf x)=f_i(P_0; \mathbf x).
\end{equation}

By Theorem \ref{thm:fifact}  for each $\ell>0$ the function $f_i(P_\ell; \mathbf x)$ factors as
\begin{equation}\label{eq:fact1}
f_i(P_\ell; \mathbf x)=c_\ell\prod_{j=1}^nl_{\ell,j}(\mathbf x)^{n-2},
\end{equation}
where we assume the linear forms in this factorization have been normalized so that
$$l_{\ell,j}(\mathbf x)=\mathbf u_{\ell,j}\cdot \mathbf x,$$
with $||\mathbf u_{\ell,j} ||=1$ for all $j$.
We further assume  complex vectors $\mathbf u_{\ell,2j-1}=\overline{\mathbf u_{\ell,2j}}$ for $1\le j\le k_1$ are associated to the the non-real linear forms, and real vectors $\mathbf u_{\ell,j}$ $2k_1< j\le n$ to the real ones.

By compactness of the unit sphere in $\C^n$, passing to a subsequence of $\{P_\ell\}$, we may assume that for each $j$
$$\lim_{\ell\to \infty} \mathbf u_{\ell,j} =\mathbf u_j,$$ 
for some unit vector $\mathbf u_j$. Let $l_j(\mathbf x)=\mathbf u_j\cdot \mathbf x.$
Now by equations \eqref{eq:lim1} and \eqref{eq:fact1} we see
$$f_i(P_0; \mathbf x)= \left (\lim_{\ell \to \infty} c_\ell \right ) \prod_{j=1}^nl_j(\mathbf x)^{n-2}.$$
Since for $n-2k_1$ values of $j$ the $\mathbf u_{\ell,j}$ are real, at least this many of the $\mathbf u_j$ are. Thus, since $k_0\ne k_1$, we have $k_0<k_1$.

However, $k_0<k_1$ implies that  for some $1\le j\le k_1$, $\mathbf u_{2j}=\lim_{\ell\to \infty} \mathbf u_{\ell,2j} $ is real.
But since $\mathbf u_{\ell,2j-1}=\overline{\mathbf u_{\ell,2j}}$, this means $\mathbf u_{2j}=\mathbf u_{2j-1}$.
That is impossible, as these vectors are, up to scaling, rows of some $g_i$ where $P_0=D(g_1,g_2,g_3)$, and thus must be independent.

Thus the signature is  constant on each component.

\smallskip

The number of  path components  that exist for any fixed value of $k$, $0\le k\le n/2$, is always at least 1, since one can construct a real tensor of signature $(n-2k,k)$.  We now turn to giving an upper bound on the number of such components.

Note first that  a $2\times n$ matrix with complex conjugate rows can be expressed as 
$$\begin{pmatrix} 1 & i\\1&-i\end{pmatrix}\begin{pmatrix} \mathbf r_1\\\mathbf r_2\end{pmatrix},$$ 
for row vectors $\mathbf r_1, \mathbf r_2\in \R^n$. Thus if $J_k$ is an $n\times n$ block diagonal matrix with $k$ blocks of $\begin{pmatrix} 1 & i\\1&-i\end{pmatrix}$
and $n-2k$ singleton blocks of $1$,  then tensors in $V_n(\R)\smallsetminus Z_n$ with signature $(n-2k,k)$  form the $G(\R)$ orbit of $D(J_k,J_k,J_k)$. We thus seek to bound the number of  path components of this orbit.

Recall that $GL_n(\R)$ has two path components, $GL_n^+(\R)$ and $GL_n^-(\R)$, with membership according to the sign of the determinant. Then $G(\R)$ has 8 path components, one of which is $$G^+(\R)=GL_n^+(\R)\times GL_n^+(\R)\times GL_n^+(\R).$$ 
The trivial bound on the number of components of the $G(\R)$ orbit of $D(J_k,J_k,J_k)$ is thus 8.

Suppose $k\le (n-1)/2$, so tensors of signature $(n-2k,k)$ have  at least one real rank-1 component, and $J_k$ has at least one $1\times1$ diagonal block, which we assume is the last. Let $K=\diag(1,1,\dots, 1, -1)$, and observe that since $J_kK=K J_k$,

\begin{align*}D(J_k,J_k,J_k)(K,I,I)&=D(K,I,I)(J_k,J_k,J_k)\\&=D(I,K,I)(J_k,J_k,J_k)
\\&=D(J_k,J_k,J_k)(I,K,I),
\end{align*}
and similarly $D(J_k,J_k,J_k)(I,K,I)=D(J_k,J_k,J_k)(I,I,K).$ Since $\det(K)=-1$,
this implies that the $G(\R)$ orbit of $D(J_k,J_k,J_k)$ is the union of the $G^+(\R)$ orbits of $D(J_k,J_k,J_k)$ and $D(J_k,J_k,J_k)(K,I,I)$. To show there is only one path component, we now need only show $D(J_k,J_k,J_k)$ and $D(J_k,J_k,J_k)(K,I,I)$ are in the same component.

\smallskip

If $k<(n-1)/2$, then
$J_k$ has at least two $1\times1$ diagonal blocks in the last positions.
Let 
$$R_2=\begin{pmatrix} 0&1\\-1&0\end{pmatrix},\ \sigma_2=\begin{pmatrix} 0&1\\1&0\end{pmatrix}.$$
Then one checks that
$$D_2(\sigma_2,R_2,R_2)=D_2.$$
Letting $R$ be the $n\times n$ block diagonal matrix $R=\diag(1,1,\dots 1, R_2)$, and $\sigma=\diag(1,1,\dots,1,\sigma_2)$, it follows that
$$D(J_k,J_k,J_k)(\sigma,R,R)=D(J_k,J_k,J_k).$$
Since $\det(R)=1$ and $\det(\sigma)=-1=\det(K)$,  $D(J_k,J_k,J_k)(\sigma,R,R)$ is in the $G^+(\R)$ orbit of, and hence the path component of, $D(J_k,J_k,J_k)(K,I,I)$. Thus $D(J_k,J_k,J_k)$ and $D(J_k,J_k,J_k)(K,I,I)$ are in the same component, and there is only one path component of signature $(n-2k,k)$.

\smallskip

In the case when $n$ is odd and $k=(n-1)/2$, $J_k$ has a single $1\times1$ block in the last position. Observe
that
$$\begin{pmatrix} 1& i\\1 & -i\end{pmatrix} \begin{pmatrix} 1& 0\\0 & -1\end{pmatrix} =\begin{pmatrix} 0&1\\1 & 0\end{pmatrix}\begin{pmatrix} 1& i\\1 & -i\end{pmatrix},
$$
so if $L=\diag(1,1,\dots,1,-1,1),$ then $$J_kL=\diag(1,1,\dots,1,\sigma_2,1) J_k.$$ 
Thus 
$$D(J_k,J_k,J_k)(L,L,L)=D( J_k, J_k, J_k).$$
Now $D(J_k,J_k,J_k)(LK,LK,LK)$ is in the $G^+(\R)$ orbit of, and thus path component of, $D(J_k,J_k,J_k)$,  but
\begin{align*}
D(J_k,J_k,J_k)(LK,LK,LK)&=D(J_k,J_k,J_k)(L,L,L) (K,K,K)\\
&=D(J_k,J_k,J_k)(K,K,K)\\ &=D(J_k,J_k,J_k)(K,I,I).\end{align*}
Thus there is only one path component of signature $(1,k)$.

\smallskip

If $n$ is even and $k=n/2$,   $J_k$ has only $2\times2$ blocks on its diagonal. Then $J_k K= \sigma J_k$ where $\sigma=\diag(1,1,\dots,1,\sigma_2)$. Thus
\begin{align*}D(J_k,J_k,J_k)(K,K,I)&=D(\sigma,\sigma, I)(J_k,J_k,J_k)\\&=D(I,I,\sigma)(J_k,J_k,J_k)\\&=D(J_k,J_k,J_k)(I,I,K),
\end{align*}
with similar formulas for the action of $(K,I,K)$ and $(I,K,K)$ on $D(J_k,J_k,J_k)$. One then sees the $G(\R)$ orbit of $D(J_k,J_k,J_k)$ is the union of
the $G^+(\R)$ orbits of $D(J_k,J_k,J_k)$, $D(J_kK,J_k,J_k)$, $D(J_k,J_kK,J_k)$, and $D(J_k,J_k,J_kK)$. To show there are 4 path components of signature $(0,k)$ it remains to show these four tensors lie in different components.

To this goal, consider a point $P_0$ of signature $(0,k)$, so
$$P_0=D(J_k,J_k,J_k)(g_1,g_2,g_3),$$ with $(g_1,g_2,g_3)\in G(\R)$.
Since $f_i(P_0; \mathbf x)\ne 0$, we also have $h_i(P_0; \mathbf x)\ne 0$. But by equation \eqref{eq:htrans}, $$h_3(P_0; \mathbf x)=\det(g_1)\det(g_2)\det(J_k)^2 h_3(D(I,I,J_k);g_3\mathbf x)$$ 
while a direct calculation shows
$$h_3(D(I,I,J_k);\mathbf x)=\prod_{i=1}^{k}(x_{2i-1}^2+x_{2i}^2).$$ 
Thus $h_3(P_0;\mathbf x)$ is a non-zero polynomial in $\mathbf x$ whose values are either non-negative on all of $\R^n$, or non-positive, with similar statements valid for $h_1,h_2$. But a straightforward continuity argument shows that along a path composed of points $P$ with signature $(0,k)$ the polynomial $h_i(P;\mathbf x)$, viewed as a function of $\mathbf x$, cannot pass between being non-negative valued and non-positive valued without being identically zero for some $P$. Since it is not identically zero at any point of signature $(0,k)$, on each path component it must be either non-negative valued for all $P$, or non-positive valued for all $P$.

But since 
\begin{multline*}h_3(D(J_k,J_k,J_k);\mathbf x)= h_3(D(J_k,J_k,J_k)(I,I,K);\mathbf x)\\=-h_3(D(J_k,J_k,J_k)(K,I,I);\mathbf x) =-h_3(D(J_k,J_k,J_k)(I,K,I);\mathbf x)
\end{multline*}  and
\begin{multline*}
h_1(D(J_k,J_k,J_k);\mathbf x)= h_1D(J_k,J_k,J_k)(K,I,I);\mathbf x)\\ =-h_1(D(J_k,J_k,J_k)(I,K,I);\mathbf x)=-h_1(D(J_k,J_k,J_k)(I,I,K);\mathbf x),\end{multline*}
we can conclude that the 4 points all lie in different path components, and so there are 4 path components of points of signature $(0,k)$.
\end{proof}

\smallskip

\noindent
\emph{Remark.} Even in the well-studied case $n=2$, the assertion of Theorem \ref{thm:path} seems to be a new result.

\section{Application to a Stochastic Model}\label{sec:stat}

We consider here a discrete statistical model with a single hidden variable, in order to obtain a semialgebraic description of its set of probability distributions

\begin{figure}[h]
\begin{center}\includegraphics[height=1in,width=1in]{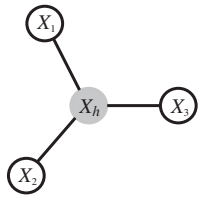}
\end{center}
\caption{A graphical model, in which the 3 leaves represent observed random variables, $X_1,X_2,X_3$, and the central node a hidden random variable, $X_h$. As considered here, all variables are assumed to have $n$ states. The structure of the graph indicates the leaf variables are independent when conditioned on the hidden one.}\label{fg:3taxa}
\end{figure}

 As a graphical model, it is specified by a 3-leaf tree, as shown in Figure \ref{fg:3taxa}. The internal node of the tree, and each leaf, represent random variables, all with $n$ states. The internal node variable is hidden (\emph{i.e.}, unobservable). The observed leaf variables are independent when conditioned on the state of the hidden one. The hidden variable thus provides an `explanation' of dependencies between the observed ones. 
This simple conditional independence model with a hidden variable is common in many statistical applications, and  variously called a hidden naive Bayes model, a latent class model, or a 3-leaf tree model, though often the four variables are allowed to have state spaces of different sizes. 

In applications, the observed variables in this model might represent three characteristics (such as the results, $+$ or $-$, of medical tests) measured on individuals in a population, while the hidden variable represents a `latent class' to which the individual belongs (such as whether the individual has or does not have a certain disease). The probabilities of the test outcomes depend on the disease condition, yet given an individual's disease state, the results of the tests are independent of each other.

Given this model for fixed $n$, one can view a probability distribution arising from it as an $n\times n\times n$ tensor. A natural question is to find a semialgebraic characterization of such tensors, that is, a collection of polynomial equalities and inequalities that precisely cut out the distributions arising from the model. The structure of the probability model ensures that the rank of the tensor is at most $n$, and moreover that each summand in a decomposition as a sum of $n$ rank-1 tensors has non-negative entries. Understanding polynomials equalities holding on these distributions amounts to understanding the defining ideal of the variety $V_n$, work on which was reviewed in \S \ref{sec:variety}.  Inequalities holding on such tensors are much more poorly understood.
While the existence of inequalities 
in the tensor entries that ensure it arises from meaningful stochastic parameters (e.g., non-negative) follows from general theory of real algebraic geometry, explicit inequalities have previously not been given for arbitrary $n$.

Our interest in the model is motivated by its appearance as the general Markov model in phylogenetics, where in the special case $n=4$ it is used to model evolution of DNA sequences by base substitution. One might think of the unobserved variable as representing the base (A, C, T, or G)  at a site in a sequence of an ancestral organism from which we have no data, and the observed variables as the state of the site in three currently extant descendants of it. Though trees with more than 3 leaves are of course essential for phylogenetic applications, in a related work \cite{ART} it is shown how to extend a semialgebraic description for the 3-leaf tree to $m$-leaf trees.

\smallskip

Denoting this model by $\mathcal M_n$, we first describe its parameterization. With $[n]=\{1,2,\dots, n\}$ as the state space of all random variables, let $\boldsymbol \pi=(\pi_1,\dots, \pi_n)$ denote the probability distribution vector for the hidden variable $X_h$, so $\pi_i=\PP(X_h=i)$. For the observed variables $X_i$, $i=1,2,3$, a $n\times n$ stochastic matrix $M_i$ has $(j,k)$-entry specifying the conditional probability $\PP(X_i=k\mid X_h=j)$, so each row of each $M_i$ sums to 1. 
The connection of this model to the tensor rank questions we study in this paper arises from the observation that a probability distribution for $\mathcal M_n$ is specified by the $n\times n\times n$ tensor 
$$P=\Diag(\boldsymbol \pi)(M_1,M_2,M_3).$$
The domain we consider for the parameterization map is specified by requiring 

\begin{enumerate}
\item $\boldsymbol \pi$ has strictly positive entries summing to 1, 
\item the $M_i$ have non-negative entries and row sums of 1, and
\item the $M_i$ are non-singular. 
\end{enumerate}
For some statistical applications it is also natural to strengthen the second requirement so that the $M_i$ have strictly positive entries; we thus comment on this situation as well in Proposition \ref{prop:stoch} below.

Note that a few trivial inequalities in the entries of $P$ that must hold  are obvious: Since $P\in \mathcal M_n$ is a probability distribution, its entries must be non-negative. If one additionally assumes the $M_i$ have strictly positive entries, then $P$ must have strictly positive entries as well.

 \medskip

For $n=2$, a complete semialgebraic description of $\mathcal M_2$ has been given in two recent independent works \cite{ZS, Klaere}, using different approaches. In particular, in \cite{ZS}  the $2\times 2\times 2$ hyperdeterminant $\Delta$ plays a key role, though many statistically-motivated ideas are also used. Here we give a semialgebraic description of $\mathcal M_n$ for all $n\ge 2$, using the invariants developed in earlier sections that generalize $\Delta$.

\smallskip

Recall a \emph{principal minor} of a matrix is the determinant of a submatrix chosen with the same row and column indices. A \emph{leading} principal minor is one for which these indices are $\{1,2,3\dots,k\}$ for some $k$.

\smallskip

\begin{proposition}\label{prop:stoch}
A $n\times n\times n$ tensor $P$ is in the image of the parametrization map for $\mathcal M_n$ if, and only if, the following conditions hold:
\begin{enumerate}
\item \label{it:1}$P$ is real, with non-negative entries summing to 1.
\item \label{it:2} For some (and hence all) $i$, $P$ satisfies the commutation relations given by equation \eqref{eq:commute}, and  the polynomial $f_i(P;\mathbf x)$ is not identically zero.
\item \label{it:3} $\det(P*_i\mathbf 1)\ne 0$ for all $i\in\{1,2,3\}$.

\item \label{it:5}For at least one (and hence all) of the following matrices, all leading principal minors are strictly positive:  
\begin{align}
\det(P*_1\mathbf 1) (P*_2 \mathbf 1)\adj(P*_1 \mathbf 1)(P*_3 \mathbf 1)^{T}\notag\\
\det(P*_2\mathbf 1) (P*_1\mathbf 1)\adj(P*_2 \mathbf 1)(P*_3 \mathbf 1)\label{eq:cond4}\\
\det(P*_3\mathbf 1) (P*_1 \mathbf 1)^T\adj(P*_3 \mathbf 1)(P*_2 \mathbf 1)\notag
\end{align}
\item \label{it:6} For all $1\le l \le n$, all principal minors of the three matrices
\begin{align}
\det(P*_1\mathbf 1) (P*_2 \mathbf 1)\adj(P*_1 \mathbf 1)(P*_3 \mathbf e_l)^{T}\notag\\
\det(P*_1\mathbf 1) (P*_2 \mathbf e_l)\adj(P*_1 \mathbf 1)(P*_3 \mathbf 1)^{T}\label{eq:cond5}\\
\det(P*_2\mathbf 1) (P*_1\mathbf e_l)\adj(P*_2 \mathbf 1)(P*_3 \mathbf 1)\notag
\end{align} 
are non-negative. 
\end{enumerate}
Here $\adj(M)$ denotes the classical adjoint of a matrix $M$.

If parameters of the model are restricted so that entries of $M_i$ are strictly positive, then in condition \eqref{it:6} one should replace `principal minors' by `leading principal minors' and  `non-negative' by `positive'.
\end{proposition}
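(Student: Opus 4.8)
\emph{Proof strategy.} The whole proposition can be proved by tracking how the matrices in \eqref{eq:cond4} and \eqref{eq:cond5} simplify in terms of the parameters, and the strengthened final sentence then falls out by replacing one elementary lemma with its strict analogue. The basic identity to set up first is that for $P=D(g_1,g_2,g_3)$ and $\{i,j,k\}=\{1,2,3\}$ with $j<k$ one has $P*_i\mathbf v=g_j^{T}\diag(g_i\mathbf v)g_k$ (immediate from the slice description in \S\ref{sec:defs}). If $P=\Diag(\boldsymbol\pi)(M_1,M_2,M_3)$, write $g_1=\diag(\boldsymbol\pi)M_1$, $g_2=M_2$, $g_3=M_3$, so $P\in\mathcal D(\R)$; then $M_2\mathbf 1=M_3\mathbf 1=\mathbf 1$ gives $P*_3\mathbf 1=M_1^{T}\diag(\boldsymbol\pi)M_2$, $P*_2\mathbf 1=M_1^{T}\diag(\boldsymbol\pi)M_3$, $P*_1\mathbf 1=M_2^{T}\diag(\boldsymbol\pi)M_3$, and $P*_i\mathbf e_l=g_j^{T}\diag(g_i\mathbf e_l)g_k$, from which conditions \eqref{it:1}, \eqref{it:2} (via Corollary \ref{cor:f}, since $P\in\mathcal D(\C)$), and \eqref{it:3} are immediate. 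For \eqref{it:5} and \eqref{it:6} I would substitute these expressions and $\adj(A)=\det(A)A^{-1}$ into \eqref{eq:cond4} and \eqref{eq:cond5} and telescope; every cancellation is forced by the $\boldsymbol\pi$'s, and each matrix collapses, up to a factor $\det(P*_i\mathbf 1)^2>0$, to one of the form $N^{T}EN$ with $N$ a real nonsingular matrix (one of $M_1,M_2,M_3$, possibly rescaled by $\diag(\boldsymbol\pi)^{1/2}$) and $E$ diagonal with entries $\pi_m$ in case \eqref{it:5} and $\pi_m M_r(m,l)$ in case \eqref{it:6}. Since $\boldsymbol\pi>0$ and the $M_r\ge0$, each such $E$ is positive semidefinite (positive definite in case \eqref{it:5}), hence so is $N^{T}EN$, so all its principal minors are $\ge 0$ and all its leading principal minors are $>0$ when $E$ is positive definite; this gives \eqref{it:5} and \eqref{it:6}. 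If moreover all $M_r>0$, then every $E$ above is positive definite, so all leading principal minors of each matrix in \eqref{eq:cond5} are strictly positive, which is exactly the strengthened form of \eqref{it:6}.

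\emph{Converse.} Assume \eqref{it:1}--\eqref{it:6}. By \eqref{it:2} and Corollary \ref{cor:f}, $P\in\mathcal D(\C)$, so $P=D(g_1,g_2,g_3)$; since $P*_i\mathbf 1=g_j^{T}\diag(g_i\mathbf 1)g_k$, condition \eqref{it:3} forces every row of every $g_i$ to have nonzero sum, so after the rescaling of the rank-one summands that is pinned down by requiring the rows of $g_2,g_3$ to sum to $1$ we may take $g_2=M_2$, $g_3=M_3$ row-stochastic, $g_1=\diag(\boldsymbol\pi)M_1$ with $M_1$ row-stochastic and every $\pi_m\ne0$; moreover $\sum_m\pi_m$ equals the sum of all entries of $P$, which is $1$ by \eqref{it:1}. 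Because $P$ is real, $\overline P=D(\overline{g_1},\overline{g_2},\overline{g_3})$, so the Kruskal uniqueness argument used for Lemma \ref{lem:real} yields a single permutation matrix $\sigma$ with $\sigma^2=I$ and $\overline{g_i}=\sigma g_i$ for all $i$ (the same $\sigma$ in all three modes, as a permutation of the rank-one summands acts simultaneously). With this normalization the first matrix of \eqref{eq:cond4} equals $\det(P*_1\mathbf 1)^2\,g_1^{T}\diag(\boldsymbol\pi)^{-1}g_1$, which is real (a polynomial in the real entries of $P$) and symmetric; Sylvester's criterion and condition \eqref{it:5} then make it positive definite.

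\emph{The crux.} Next one rules out complex rank-one components and non-positive $\pi_m$. If $\sigma$ had a $2$-cycle $i\leftrightarrow j$, then $\pi_j=\overline{\pi_i}$, and for $\mathbf w:=w_i\mathbf e_i+\overline{w_i}\mathbf e_j$ one checks $\sigma^{-1}\overline{\mathbf w}=\mathbf w$, so $\mathbf v:=g_1^{-1}\mathbf w$ is a nonzero \emph{real} vector, on which the quadratic form of the matrix above evaluates to $\pi_i^{-1}w_i^2+\overline{\pi_i}^{-1}\overline{w_i}^2=2\operatorname{Re}(\pi_i^{-1}w_i^2)$, which equals $-2$ for the choice $w_i^2=-\pi_i$ --- contradicting positive definiteness. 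Hence $\sigma=I$, so all $g_i$ and all $M_i$ are real and $P\in\mathcal D(\R)$; then $g_1^{-1}\mathbf e_m$ is real, and evaluating the form on it gives $\pi_m^{-1}$, so positive definiteness forces $\pi_m>0$ for every $m$. I expect this restriction-to-a-real-subspace step to be the main obstacle: since $g_1$ may a priori be complex, $T$-congruence carries no information, so one must exhibit explicit real test vectors lying on the real subspace $g_1\R^n$ along which the a priori complex form $\sum_m\pi_m^{-1}w_m^2$ is forced to take real values, and extract a sign from that.

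\emph{Non-negativity of the $M_i$.} With $\boldsymbol\pi>0$ real, set $N_1=\diag(\boldsymbol\pi)^{1/2}M_1$ (real, nonsingular); the telescoping computation shows the first matrix of \eqref{eq:cond5} equals $\det(P*_1\mathbf 1)^2\,N_1^{T}\diag(M_3(\cdot,l))N_1$, and similarly for the other two matrices of \eqref{eq:cond5} with $M_3$ replaced by $M_2$, and with $N_2=\diag(\boldsymbol\pi)^{1/2}M_2$ and $M_1$. I would then invoke three elementary facts: (a) a real symmetric matrix has all principal minors $\ge0$ iff it is positive semidefinite (a negative eigenvalue forces some elementary symmetric function of the eigenvalues, hence some principal minor, to be negative); (b) by Sylvester's criterion it has all leading principal minors $>0$ iff it is positive definite; and (c) for $N$ invertible, $N^{T}DN$ is positive (semi)definite exactly when $D$ is. Applying these with $D=\diag(M_r(\cdot,l))$, condition \eqref{it:6} gives $M_r(\cdot,l)\ge0$ for all $l$, i.e. $M_1,M_2,M_3\ge0$, and the strengthened condition gives each $\diag(M_r(\cdot,l))$ positive definite, i.e. $M_1,M_2,M_3>0$. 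Thus $P=\Diag(\boldsymbol\pi)(M_1,M_2,M_3)$ with the required parameters, with strictly positive $M_r$ under the strengthening; and since positive definiteness implies all principal minors are positive, the strengthened conditions do imply the original \eqref{it:6}, so a $P$ satisfying them indeed lies in $\mathcal M_n$.
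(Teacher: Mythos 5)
Your proposal is correct and follows essentially the same route as the paper: express the matrices in \eqref{eq:cond4} and \eqref{eq:cond5} as congruences $M_i^T\diag(\boldsymbol\pi)M_i$ and $M_i^T\diag(\boldsymbol\pi)\diag(\tilde{\mathbf m}_r^l)M_i$ via Theorem \ref{thm:fzero}, Lemma \ref{lem:real}, and the row-sum normalization forced by condition \eqref{it:3}, then apply Sylvester's criterion in both directions. The only real difference is cosmetic: at the reality step you test the form $g_1^T\diag(\boldsymbol\pi)^{-1}g_1$ on the single real vector $g_1^{-1}(w_i\mathbf e_i+\overline{w_i}\mathbf e_j)$ with $w_i^2=-\pi_i$, whereas the paper tests $M_i^T\diag(\boldsymbol\pi)M_i$ on two real vectors orthogonal to the other rows and derives contradictory signs for $\Re(\pi_1)$; both are valid and of comparable length.
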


\smallskip

Note that the only equality constraints in the theorem are those in conditions \eqref{it:1} and \eqref{it:2}. In particular, a full set of generators of the ideal
 $I(V_n)$ is not used (when $n\ge3$) in this semialgebraic description of the model.
 
\medskip

Our proof will use repeatedly the following well-known classical result on matrices defining quadratic forms. \begin{theorem}[Sylvester's Theorem]
Let $A$ be an $n\times n$ real symmetric matrix and $Q(\mathbf v)=\mathbf v^TA\mathbf v$ the associated quadratic form on $\R^n$.
Then
\begin{enumerate} 
\item $Q$ is positive definite if, and only if, all leading principal minors of $A$ are strictly positive.
\item $Q$ is positive semidefinite if, and only if, all principal minors of $A$ are non-negative.
\end{enumerate}
\end{theorem}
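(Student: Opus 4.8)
The plan is to prove the two equivalences separately; in each case the ``only if'' direction follows by restricting $Q$ to coordinate subspaces, while the ``if'' directions require, respectively, an induction on $n$ and a limiting argument.

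For part~(1), the ``only if'' direction runs as follows: if $Q$ is positive definite, then for each $k$ the restriction of $Q$ to $\{\mathbf v\in\R^n : v_{k+1}=\cdots=v_n=0\}$ is still positive definite, and the matrix of this restricted form is exactly the leading $k\times k$ submatrix $A_k$; a symmetric positive definite matrix has all eigenvalues positive, hence $\det A_k>0$. For the ``if'' direction I would induct on $n$, the case $n=1$ being trivial. In the inductive step write $A=\begin{pmatrix}A_{n-1}&\mathbf b\\\mathbf b^{T}&c\end{pmatrix}$; the leading principal minors of $A_{n-1}$ are the first $n-1$ leading principal minors of $A$, so by induction $A_{n-1}$ is positive definite, in particular invertible. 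With $L=\begin{pmatrix}I&-A_{n-1}^{-1}\mathbf b\\0&1\end{pmatrix}$ one computes $L^{T}AL=\operatorname{diag}\!\big(A_{n-1},\,c-\mathbf b^{T}A_{n-1}^{-1}\mathbf b\big)$, and taking determinants identifies the Schur complement $c-\mathbf b^{T}A_{n-1}^{-1}\mathbf b$ with $\det A/\det A_{n-1}>0$. Hence $L^{T}AL$ is positive definite, and since congruence by an invertible matrix preserves positive definiteness, so is $A$.

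For part~(2), the ``only if'' direction is analogous: for any index set $S\subseteq\{1,\dots,n\}$, restricting the positive semidefinite $Q$ to the coordinate subspace indexed by $S$ gives a positive semidefinite form with matrix the principal submatrix $A_S$, so $\det A_S\ge 0$. For the ``if'' direction I would perturb: for $\varepsilon>0$, the leading $k\times k$ minor of $A+\varepsilon I$ equals $\det(A_k+\varepsilon I_k)$, which as a polynomial in $\varepsilon$ has top coefficient $1$ and coefficient of $\varepsilon^{k-j}$ equal to the sum of the $j\times j$ principal minors of $A_k$; each of these is a principal minor of $A$, hence $\ge 0$ by hypothesis, so $\det(A_k+\varepsilon I_k)\ge\varepsilon^k>0$. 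Thus all leading principal minors of $A+\varepsilon I$ are strictly positive, and part~(1) gives that $A+\varepsilon I$ is positive definite for every $\varepsilon>0$. Letting $\varepsilon\to 0^{+}$, for each fixed $\mathbf v$ we get $\mathbf v^{T}A\mathbf v=\lim_{\varepsilon\to 0^{+}}\mathbf v^{T}(A+\varepsilon I)\mathbf v\ge 0$, so $A$ is positive semidefinite.

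The main obstacle is the ``if'' direction of part~(2). It is crucial that \emph{all} principal minors are assumed non-negative, not merely the leading ones: $\operatorname{diag}(0,-1)$ has both leading principal minors equal to $0$ yet is not positive semidefinite, so the naive analogue of the criterion in part~(1) fails, and the reason the full hypothesis is needed surfaces only in the coefficient computation for $A+\varepsilon I$. The inductive Schur-complement step in part~(1) is the other point requiring a little care, but it is routine once the block congruence $L$ is written down.
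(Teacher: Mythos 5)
The paper states this as a classical result (attributed to Sylvester) and does not supply its own proof, using it only as a black box in the argument for Proposition 6.1, so there is no in-paper proof to compare against. Your proof is correct and takes the standard route: part (1) by induction via the block congruence that diagonalizes off the Schur complement $c-\mathbf b^{T}A_{n-1}^{-1}\mathbf b=\det A/\det A_{n-1}$, and part (2) by perturbing to $A+\varepsilon I$ and using the identity $\det(A_k+\varepsilon I_k)=\sum_{j=0}^{k}\varepsilon^{k-j}E_j(A_k)$, where $E_j(A_k)$ is the sum of the $j\times j$ principal minors of $A_k$ (hence of $A$), so positivity of the leading minors of $A+\varepsilon I$ follows and the definiteness passes to the limit. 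You also correctly flag the subtle point: the example $\operatorname{diag}(0,-1)$ shows that non-negativity of leading principal minors alone does not give semidefiniteness, and the coefficient computation is exactly where the full hypothesis of all principal minors being non-negative is used. No gaps.
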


\medskip

\begin{proof}[of Proposition \ref{prop:stoch}]

We first discuss the necessity of these conditions. The necessity of \eqref{it:1} is clear. Condition \eqref{it:2} holds by Theorem \ref{thm:fzero}, since $$P=\Diag(\boldsymbol \pi)(M_1,M_2,M_3)=D(\diag(\boldsymbol \pi)M_1,M_2,M_3)$$ shows $P$ is in the $G(\C)$-orbit of $D$.

For \eqref{it:3}, observe that $P*_i\mathbf 1$ is the marginalization of the distribution to
two observed variables, so one sees
$$P*_i\mathbf 1=M_j^T\diag(\boldsymbol \pi)M_k$$
for distinct $i,j,k$. Since $\boldsymbol \pi$ has positive entries, and $M_j,M_k$ are non-singular, the determinant of this matrix is non-zero.

Condition \eqref{it:5} can be restated, after dividing the first formula of \eqref{eq:cond4} by the positive number $\det(P*_1\mathbf 1)^2$, as asserting the positivity of leading principal minors of
$$ (P*_2 \mathbf 1)(P*_1 \mathbf 1)^{-1}(P*_3 \mathbf 1)^{T},$$
and two similar expressions. But expressed in terms of parameters, this is 
$$(M_1^T\diag (\boldsymbol \pi)M_3 )(M_2^T \diag (\boldsymbol \pi)M_3)^{-1} (M_1^T \diag (\boldsymbol \pi)M_2)^T=
M_1^T\diag (\boldsymbol \pi)M_1.$$ This symmetric matrix, and similar ones obtained from the other expressions, define positive definite quadratic forms because $\boldsymbol \pi$ has positive entries. Thus by Sylvester's Theorem, all their principal minors are positive. 

A similar argument shows the necessity of condition \eqref{it:6}. For instance, letting $\mathbf r_l^3$ be the vector whose
entries are the products $\pi_i M_3(i,l)$, one sees
$$\det(P*_1\mathbf 1) (P*_2 \mathbf 1)\adj(P*_1 \mathbf 1)(P*_3 \mathbf e_l)^{T}=\det(P*_1\mathbf 1)^2 M_1^T \diag(\mathbf r_l^3) M_1.$$
Since the entries of $M_3$ are non-negative, this matrix defines a positive semidefinite quadratic form, and thus by Sylvester's Theorem has non-negative principal minors.
If the entries of the $M_i$ are positive, then this matrix  defines a positive definite form, and thus has positive leading principal minors.
\medskip

Turning to sufficiency, assume conditions (\ref{it:1}-\ref{it:6}) are met by a tensor $P$. By Theorem \ref{thm:fzero}, condition \eqref{it:2} 
implies $P=D(g_1,g_2,g_3)$ for some $g_i\in GL(n,\C)$. Moreover, by the realness of $P$ in condition \eqref{it:1}, from Lemma \ref{lem:real} we also know any complex entries in the $g_i$ occur in complex conjugate rows.
Our goal is to modify this expression, so the $g_i$ are replaced by stochastic matrices, and $D$ by a diagonal tensor with positive entries.

Letting $\mathbf s_i=g_i\mathbf 1$ be the vector of row sums of $g_i$, we have
\begin{align*}P*_1\mathbf 1&=g_2^T \diag(\mathbf s_1) g_3,\\
P*_2\mathbf 1&=g_1^T \diag(\mathbf s_2) g_3,\\
P*_3\mathbf 1&=g_1^T \diag(\mathbf s_3) g_2.
\end{align*}
Thus the non-vanishing of the determinants of these matrices by condition \eqref{it:3} tells us the row sums are all non-zero. Letting $M_i =\diag(\mathbf s_i)^{-1}g_i$, and $\boldsymbol \pi$ be the vector of entry-wise products of $\mathbf s_1, \mathbf s_2, \mathbf s_3$, we thus have
$P=\Diag(\boldsymbol \pi)( M_1, M_2, M_3)$. Here each $M_i$ has unit row sums,  $\boldsymbol \pi$ has non-zero entries, and
\begin{align*}P*_1\mathbf 1&=M_2^T \diag(\boldsymbol \pi) M_3,\\
P*_2\mathbf 1&=M_1^T \diag(\boldsymbol \pi) M_3,\\
P*_3\mathbf 1&=M_1^T \diag(\boldsymbol \pi) M_2.
\end{align*}
Since $P$ is real, these expressions are as well, though
we have not yet shown that $M_i$ and $\boldsymbol \pi$ have real entries. Nonetheless,  all $M_i$ have the same number of conjugate (non-real) pairs of rows, in corresponding positions, with the corresponding entries of $\boldsymbol \pi$ also conjugate (though possibly real).

Now substituting the above expressions for marginalizations in the three expressions in condition \eqref{it:5}, they simplify to
\begin{align*}\det(P*_1 \mathbf 1)^2&M_1^T\diag(\boldsymbol \pi) M_1,\\
\det(P*_2 \mathbf 1)^2&M_2^T \diag(\boldsymbol \pi) M_2,\\
\det(P*_2 \mathbf 1)^2&M_3^T \diag(\boldsymbol \pi) M_3.
\end{align*}
This shows that $M_i^T\diag(\boldsymbol \pi) M_i$ is real for each $i$. We now argue that if $M_i$ is not real, then the  quadratic form $Q_i$ associated to
$ M_i^T\diag(\boldsymbol \pi) M_i$ is not positive definite.
 To that end, suppose  two rows (say, the first two) of $M_i$ are complex conjugates, and thus by Lemma  \ref{lem:real}, of the form
 $$\mathbf m_i^1=\mathbf r_1 +i\mathbf r_2,\ \ \ \  \mathbf m_i^2=\overline{\mathbf m}_i^1=\mathbf r_1-i\mathbf r_2,\ \ \ \mathbf r_i\in \R^n \smallsetminus \{\mathbf 0\}$$ and the corresponding entries of $\boldsymbol \pi$ are $\pi_1,\pi_2=\overline{\pi}_1$.
 Then for any real vector $\mathbf v$ orthogonal to the real and imaginary parts of the other rows of $M_i$, evaluating the quadratic form at $\mathbf v$ yields 
 $$Q_i(\mathbf v)=\pi_1 (\mathbf m_i^1\cdot \mathbf v)^2 +\overline \pi_1 (\overline{\mathbf m}_i^1\cdot \mathbf v)^2.$$
 If we additionally choose $\mathbf v$ to be orthogonal to $\mathbf r_2$, but not to $\mathbf r_1$,
 then 
 $$Q_i(\mathbf v)=\pi_1 (\mathbf r_1 \cdot \mathbf v)^2 +\overline \pi_1 (\mathbf r_1\cdot \mathbf v)^2=2\Re(\pi_1) (\mathbf r_1\cdot \mathbf v)^2.$$
Positive definiteness of $Q_i$ would thus imply $\Re(\pi_1)>0$.
However, if we instead choose $\mathbf v$ to be orthogonal to $\mathbf r_1$, but not to $\mathbf r_2$,
 Then 
 $$Q_i(\mathbf v)=\pi_1 (i\mathbf r_2 \cdot \mathbf v)^2 +\overline \pi_1 (i\mathbf r_2\cdot \mathbf v)^2=-2\Re(\pi_1) (\mathbf r_2\cdot \mathbf v)^2,$$
so positive definiteness would imply $\Re(\pi_1)<0$. Thus if $M_i$ were not real, then $Q_i$ would not be positive definite.

But if $Q_i$ is not positive definite, by Sylvester's Theorem, the positivity of leading principal minors asserted in condition \eqref{it:5} must be violated. Thus condition \eqref{it:5} implies at least one of the $M_i$ is real, so all are by Lemma \ref{lem:real}. Applying Sylvester's theorem again to the positive definite form $Q_i$ then implies that $\boldsymbol \pi$ has real positive entries.

 Finally, condition \eqref{it:6} ensures the entries of the $M_i$ are non-negative. For instance
 \begin{equation}\det(P*_1\mathbf1)(P*_2 \mathbf 1)\adj (P*_1 \mathbf 1)(P*_3 \mathbf e_j)^{T}= \det(P*_1\mathbf1)^2 M_1^T\diag(\boldsymbol \pi)\diag(\tilde{\mathbf m}_3^j)M_1.\label{eq:quadform2}
\end{equation}
where $\tilde m_3^j$ is the $j$th column of $M_3$.
Thus all principal minors of this matrix being non-negative implies the associated quadratic form is positive semidefinite and thus that
$\tilde m_3^j$ has non-negative entries.
To instead ensure these entries are strictly positive, we require that the quadratic form be positive definite, and thus that all leading principal minors be positive.
\end{proof}

For further work in this direction, we direct the reader to \cite{ART}.

\section{Application to `rank jumping'}\label{sec:jump}

It is well known that the limit of a sequence of tensors of a fixed rank $r>1$ may be strictly larger than $r$ (that is, tensor rank, unlike matrix rank, is not upper semicontinuous).
This `rank jumping' is responsible for the fact that a given tensor may not have a best approximation by a tensor of fixed lower rank, and can thus be of concern in applied settings.

A tensor that is the limit of tensors of rank $r$,  but not of smaller rank, is said to have \emph{border rank} $r$. Thus the border rank of a tensor is always less than
 or equal to its rank.

For instance, while the tensors of complex rank 2 are dense among the $2\times 2 \times 2$ tensors, there is a unique $G(\C)$-orbit of rank 3 tensors \cite{dSL}, which therefore have border rank 2.  An orbit representative, called the Werner tensor in the physics literature, is usually taken as
$$\mathbf e_2 \otimes \mathbf e_1 \otimes \mathbf e_1 + \mathbf e_1 \otimes \mathbf e_2 \otimes \mathbf e_1 +\mathbf e_1 \otimes \mathbf e_1 \otimes \mathbf e_2.$$
For our purposes, it is more convenient to apply a permutation in the second index, so that its 3-slices become
$$W=\left [ \begin{pmatrix} 1&0\\0&1\end{pmatrix},\ \ \begin{pmatrix} 0&1\\0&0\end{pmatrix}\right ],$$
and thus have the form described in Proposition \ref{prop:orbitrep}.
One may express $W$ as an explicit limit of rank 2 tensors using a difference quotient \cite{dSL}. This difference quotient construction generalizes to other formats, to produce simple examples of tensors whose rank is larger than their border rank.

\smallskip

Proposition  \ref{prop:orbitrep} suggests a different way of obtaining $W$, and many other tensors whose rank exceeds their border rank. Our goal in this section is to provide some explicit examples.
\medskip

In the $n=3$ case, consider the tensor given by 3-slices as
$$K_3=\left [ \begin{pmatrix} 1&0&0\\0&1&0\\0&0&1\end{pmatrix},\ \ \begin{pmatrix} 0&1&0\\0&0&1\\0&0&0\end{pmatrix},\ \ \begin{pmatrix} 0&0&1\\0&0&0\\0&0&0\end{pmatrix}\right ],$$ and its perturbation
$$K_{3,\epsilon}=\left [ \begin{pmatrix} 1&0&0\\0&1&0\\0&0&1\end{pmatrix},\ \ \begin{pmatrix} 0&1&0\\0&\epsilon&1\\0&0&2\epsilon\end{pmatrix},\ \ \begin{pmatrix} 0&\epsilon&1\\0&\epsilon^2&3\epsilon\\0&0&4\epsilon^2\end{pmatrix}\right ].$$ 
Both tensors are 3-slice non-singular and have commuting slices, since for each the third slice is the square of the second. 

For arbitrary $n$, one can similarly construct $K_n$ with slices whose entries are all zeros except for successive super-diagonals of $1$s. Perturbing the diagonal of the second slice by adding $(0, \epsilon, 2\epsilon, 3\epsilon, \dots, (n-1)\epsilon)$, the other slices can be perturbed to be appropriate powers of the perturbed second slice, so that one obtains $K_{n,\epsilon}$ with all 3-slices commuting. The matrix $Z_{n,\epsilon}$ of diagonals of the slices of $K_{n,\epsilon}$ is then a Vandermonde matrix, and hence non-singular for $\epsilon\ne 0$.

Now $K_{n,\epsilon}$ meets the hypotheses of Proposition \ref{prop:orbitrep}, and has slices already upper-triangularized. Moreover since $Z_{n,\epsilon}$ is non-singular for $\epsilon\ne 0$, it follows that $K_{n,\epsilon} \in \mathcal D(\C)$.

Since $K_n$ is in the closure of all $K_{n,\epsilon}$, we see $K\in V_n$ has border rank at most $n$. Since the multlinear rank of $K_n$ is $(n,n,n)$, it cannot have border rank less than $n$, so its border rank is exactly $n$.
Since $f_3(K_n; \mathbf x)=0$, Theorem \ref{thm:fzero} shows $K_n\notin \mathcal D(\C)$. Since this orbit is precisely the tensors of rank $n$ and multilinear rank $(n,n,n)$, this
implies the  tensor rank of $K_n$ must be strictly greater than $n$.

We now determine the rank precisely.

\begin{proposition} \label{prop:rankjump} For any $n>0$, $K_n$ has border rank $n$ and rank $2n-1$, over $\mathbb C$.
\end{proposition}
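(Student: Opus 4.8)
The plan is to establish the two halves of the claim --- border rank exactly $n$ and (true) rank exactly $2n-1$ --- by separate arguments, the first of which is already essentially contained in the discussion preceding the proposition. For the border rank, the text has shown $K_n \in V_n$ (as the limit of the $K_{n,\epsilon} \in \mathcal D(\C)$) and that its multilinear rank is $(n,n,n)$; since tensors of border rank $<n$ lie in $V_{n-1}$ and hence have a flattening of rank $<n$, the multilinear rank $(n,n,n)$ forces border rank exactly $n$. So the real content is the rank computation, which splits into an upper bound (rank $\le 2n-1$) and a lower bound (rank $\ge 2n-1$).

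For the upper bound, I would exhibit an explicit decomposition of $K_n$ into $2n-1$ rank-one tensors. The natural approach is to use the $3$-slices $P_1 = I$, $P_2 = N$ (the nilpotent shift with $1$s on the first superdiagonal), $P_3 = N^2$, \dots is not quite how $K_n$ is given, but in fact $K_n$ has slices $I, N, N^2$ only when $n=3$; for general $n$ the slices are $I, N, N^2, \dots$ truncated to $n$ slices, i.e. the $k$-th slice is $N^{k-1}$ for $k=1,\dots,n$. A clean way to decompose is via the identity expressing each $N^{k-1}$ as a linear combination of rank-one idempotent-like pieces: using $n$th roots of unity or, more robustly, using that the pencil $\sum_k P_k t^{k-1} = \sum_{k} N^{k-1} t^{k-1} = (I - tN)^{-1}$ up to truncation suggests writing $K_n$ through a confluent-Vandermonde / Jordan-block structure. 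Concretely I expect the decomposition to use $n$ "generic" rank-one terms coming from a perturbed diagonalization plus $n-1$ correction terms accounting for the single Jordan block of size $n$; equivalently, one writes $K_n = \lim K_{n,\epsilon}$ and tracks that the limiting CP-decomposition of the $K_{n,\epsilon}$ (which have rank $n$) degenerates in a controlled way, the $n$ collapsing rank-one terms being replaceable by $2n-1$ honest ones. Producing this decomposition explicitly and verifying it sums to $K_n$ is a routine but slightly fiddly calculation.

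For the lower bound I would invoke a known rank lower bound for tensors with a nonsingular slice. After passing to the pencil of $3$-slices $P_1=I, P_2=N, \dots, P_n = N^{n-1}$, the relevant invariant is the structure of the nilpotent part: $N$ is a single Jordan block of size $n$. There is a classical result (Ja'Ja', or the substitution method of Strassen) that the rank of a tensor with slices $I, A, A^2, \dots$ where $A$ has a single eigenvalue and a single Jordan block of size $n$ is $2n-1$; more elementarily, one can use that $\operatorname{rank}(K_n) \ge \operatorname{rank}(K_n *_3 \mathbf v) + (\text{number of slices not in the span of lower ones})$ type estimates, or Sumner/Strassen's lower bound $\operatorname{rank} \ge n + (\text{nilpotency-related defect})$. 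The cleanest route: use the substitution method to reduce slices one at a time, showing each elimination of a rank-one term can kill at most one slice beyond the "diagonalizable" budget, yielding $\operatorname{rank} \ge n + (n-1) = 2n-1$.

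The main obstacle I anticipate is the lower bound: getting from "$K_n \notin \mathcal D(\C)$, so $\operatorname{rank} > n$" (which the text already has) all the way to the sharp value $2n-1$ requires a genuine rank lower bound argument, not just the multilinear-rank or invariant-vanishing arguments available so far. I would either cite the appropriate tensor-rank lower bound from the literature for pencils equivalent to $(I, J_n(0))$, or reprove it via the substitution/Strassen method; the bookkeeping in that method --- tracking how the Jordan structure of the nilpotent slice controls how many rank-one terms are forced --- is where the real work lies. The matching upper bound decomposition is comparatively mechanical once one sees that a size-$n$ Jordan block contributes exactly $2n-1$.
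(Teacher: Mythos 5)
Your skeleton matches the paper's (border rank from the preceding discussion, then an upper bound and a separate lower bound for the rank), and the border-rank part is fine; but both halves of the rank computation are left at the level of a plan, and in each case the unproved step is exactly where the content lies. For the upper bound you never exhibit a $(2n-1)$-term decomposition. Tracking the rank-$n$ decompositions of $K_{n,\epsilon}$ will not obviously give $2n-1$: those decompositions blow up to high order in $\epsilon$ (the eigenvector matrix of the perturbed shift is a Vandermonde in $0,\epsilon,\dots,(n-1)\epsilon$), and general bounds relating rank to border rank through the order of a degeneration give estimates far weaker than $2n-1$. The roots-of-unity idea is the right one, but with $(2n-1)$th roots, not $n$th: the paper passes to the symmetrized tensor $K_n'$, with ones exactly where $i+j+k=n+2$ (up to reindexing, the multiplication tensor of $\C[x]/(x^n)$), and verifies by the roots-of-unity filter that $K_n'=\frac{1}{2n-1}\sum_{l=1}^{2n-1}\zeta^{l(n-3)}\,\mathbf v_l\otimes\mathbf v_l\otimes\mathbf v_l$, where $\zeta$ is a primitive $(2n-1)$th root of unity and $\mathbf v_l=(\zeta^l,\zeta^{2l},\dots,\zeta^{nl})$. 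Without some such explicit decomposition (or the identification with truncated polynomial multiplication together with the classical evaluation--interpolation upper bound), ``routine but slightly fiddly'' is carrying the entire upper bound.

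For the lower bound your proposed route is viable in principle --- $K_n$ is, up to permuting indices, the multiplication tensor of $\C[x]/(x^n)$, whose rank is classically $2n-1$ --- but you neither carry out the substitution bookkeeping nor state a precise citable result; and be careful that Ja'Ja's pencil theorems concern two-slice tensors such as $(I,N)$, whose rank is far below $2n-1$, so they do not apply directly to the $n$-slice tensor. The paper's own argument is short, self-contained, and needs no external input: if $K_n=\sum_{l=1}^{2n-2}\mathbf u_l\otimes\mathbf v_l\otimes\mathbf w_l$, then since the third flattening has rank $n$, some $n$ of the $\mathbf w_l$ form a basis of $\C^n$; each dual basis vector $\mathbf w_i^*$ annihilates $n-1$ of the $\mathbf w_l$, so $K_n*_3\mathbf w_i^*$ has rank at most $(2n-2)-(n-1)=n-1$; but $K_n*_3\mathbf v=v_1I+v_2N+\cdots+v_nN^{n-1}$ (with $N$ the upper shift matrix) is nonsingular whenever $v_1\neq 0$, so every $\mathbf w_i^*$ would lie in the hyperplane $v_1=0$, contradicting that the dual vectors form a basis. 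Supplying either this contraction argument or a fully executed substitution argument, together with an actual decomposition for the upper bound, is needed before the proposition is proved.
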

\begin{proof}
The fact that $K_n$ has border rank $n$ has been discussed.

\smallskip

To show the rank is at most $2n-1$ we give an explicit representation, suggested by Anders Jensen. 
We work with a more symmetric tensor $K_n'$, obtained by acting on $K_n$ by a permutation in the second index, reversing the order of the columns of each slice. For example,
$$K_3'=\left [ \begin{pmatrix} 0&0&1\\0&1&0\\1&0&0\end{pmatrix},\ \ \begin{pmatrix} 0&1&0\\1&0&0\\0&0&0\end{pmatrix},\ \ \begin{pmatrix} 1&0&0\\0&0&0\\0&0&0\end{pmatrix}\right ].$$
In general $K_n'$ will be the $n\times n \times n$ tensor of all zeros, except for 1's in the $(i,j,k)$ position when $i+j+k=n+2$.

Let $\zeta$ denote a primitive $(2n-1)$th root of unity. Let $\mathbf v_l=(\zeta^l,\zeta^{2l},\dots, \zeta^{nl})$. Then we claim that
$$K_n'=\sum_{l=1}^{2n-1}\frac 1{2n-1} \zeta^{l(n-3)} \mathbf v_l \otimes\mathbf v_l \otimes\mathbf v_l,$$ and thus $K_n'$ has rank at most $2n-1$.
Indeed, the $(i,j,k)$ entry of this sum is
$$ \frac 1{2n-1} \sum_{l=1}^{2n-1} \zeta^{l(i+j+k+n-3)} =\begin{cases} 1& \text{ if  $(2n-1) \mid (i+j+k+n-3)$ }\\0&\text{otherwise}\end{cases}.$$ 
Since $n\le(i+j+k+n-3)\le 4n-3$, the non-zero entries occur only when $i+j+k+n-3=2n-1$, \emph{i.e.}, when $i+j+k=n+2$. 

\smallskip

To see the rank is at least $2n-1$, suppose $K_n$ could be expressed as 
$$ K_n=\sum_{l=1}^{2n-2} \mathbf u_l \otimes\mathbf v_l \otimes\mathbf w_l,$$
with $\mathbf u_i, \mathbf v_i,\mathbf w_i\in \mathbb C^n$. Since the $12|3$ flattening of $K_n$ has rank $n$, the $\mathbf w_i$ must span $\C^n$. Thus without loss of generality we may assume $\mathcal B=\{\mathbf w_1,\mathbf w_2,\dots,\mathbf w_n\}$ is a basis for $\C^n$.
Let $\mathcal B^*=\{\mathbf w^*_1,\mathbf w^*_2,\dots,\mathbf w^*_n\}$ be the dual basis. Then for any $i\in\{1,2,\dots,n\}$, 
$\mathbf w^*_i$ annihilates at least $n-1$ of the $\mathbf w_j$, so $K_n*_3\mathbf w^*_i$ is a matrix of rank at most $(2n-2)-(n-1)=n-1$.
But one sees from the explicit form of $K_n$ that $K_n*_3 \mathbf w_i^*$ can have rank at most $n-1$ only if $\mathbf w_i^*$ has first coordinate $0$. 
This contradicts that $\mathcal B^*$ is a basis.
\end{proof}

\medskip

One can construct many other examples of `rank jumping' by considering variants of the arguments above using different Jordan block structures of the slices.
For example,
$$L=\left [ \begin{pmatrix} 1&0&0\\0&1&0\\0&0&1\end{pmatrix},\ \ \begin{pmatrix} 0&1&0\\0&0&0\\0&0&0\end{pmatrix},\ \ \begin{pmatrix} 0&0&0\\0&0&0\\0&0&1\end{pmatrix}\right ]$$ can be
perturbed to
$$L_\epsilon=\left [ \begin{pmatrix} 1&0&0\\0&1&0\\0&0&1\end{pmatrix},\ \ \begin{pmatrix} 0&1&0\\0&\epsilon&0\\0&0&0\end{pmatrix},\ \ \begin{pmatrix} 0&0&0\\0&0&0\\0&0&1\end{pmatrix}\right ]\in \mathcal D(\C).$$ 
Here one can see that $L$ has tensor rank at most 4 (by subtracting the third slice from the first). Since it also has multilinear rank
$(3,3,3)$, and by Theorem \ref{thm:fzero} is not in $\mathcal D(\C)$, its tensor rank must be exactly 4.

\medskip

Finally, we note that
the maximal $\C$-rank of a $n\times n\times n$ tensor of border rank $n$ for $n=2$ is well known to be $3$. For $n=3$, it is claimed in \cite{Kruskal2} that the maximal rank is $5$. The tensor $K_n$ achieves these bounds in both cases. We know of no examples of $n\times n\times n$ tensors of border rank $n$ whose rank exceeds $2n-1$, the rank of $K_n$.  It has been conjectured by one of us (JAR, see \cite{BB}) that no such tensors exist.

It should be noted that the tensors $K_n$ described in this section are similar to those given in Theorem 5.6 of \cite{Alexeev} of size $n\times n\times (\lfloor \log_2 n \rfloor+1)$, whose rank is $2n-1$ when $n=2^k$, and whose border rank has been shown to be $n$ \cite{Land2}. 
(When $n\ne 2^k$, the rank of those tensors is slightly smaller than $2n-1$.)
Related examples are also given in Corollary 5.7 of \cite{Alexeev} of  tensors of size $n\times (n+1)\times(n+1)$ and rank
approximately $3n$. However the border rank of these has only been shown to be bounded above by approximately $2n$ when $n=2^k$ \cite{Land2}, with the precise border rank unknown. Thus it is unclear what the gap between rank and border rank is for these last examples.

\section{Acknowledgements}

Much of the research in this work was conducted while ESA and JAR were in residence at the University of Tasmania as Visiting Scholars; they thank the University and the School of Mathematics and Physics for financial support and hospitality.
ESA and JAR also thank the Institut Mittag Leffler and the organizers and participants in its Spring 2011 Program in Algebraic Geometry with a View toward Applications for additional support and stimulating conversations. We thank Allesandra Bernardi especially for many helpful clarifications in discussions, and Anders Jensen for his contribution to the argument for Proposition \ref{prop:rankjump}.

\bibliographystyle{siam}

\bibliography{tensor}

\end{document}